\newtheorem{theorem}{Theorem}
\newtheorem{proposition}[theorem]{Proposition}
\newtheorem{definition}[theorem]{Definition}
\newtheorem{corollary}[theorem]{Corollary}
\newtheorem{remark}[theorem]{Remark}
\newcommand{\CP}{\mathbb{CP}}
\newcommand{\PP}{\mathbb{P}}
\newcommand{\CC}{\mathbb{C}}
\newcommand{\RR}{\mathbb{R}}
\newcommand{\U}{{\rm{U}}}
\newcommand{\z}{\mathbf{z}}
\numberwithin{equation}{section}
\numberwithin{theorem}{section}
\numberwithin{table}{section}
\numberwithin{table}{section}
\begin{document}
\bibliographystyle{amsalpha} 
\title[An equivalence of scalar curvatures]{An equivalence of scalar curvatures\\on Hermitian manifolds}
\author{Michael G. Dabkowski}
\address{Department of Mathematics, University of Michigan, Ann Arbor, 
MI, 48109}
\email{mgdabkow@umich.edu}
\author{Michael T. Lock}
\address{Department of Mathematics, University of Texas, Austin, 
TX, 78712}
\email{mlock@math.utexas.edu}
\thanks{The second author was partially supported by NSF Grant DMS-1148490}
\date{May 11, 2015}
\dedicatory{This article is dedicated Konstantin Leyzerovsky on the occasion of his birthday.}
\begin{abstract}
For a K\"ahler metric, the Riemannian scalar curvature is equal to twice the Chern scalar curvature.  The question we address here is whether this equivalence can hold for a non-K\"ahler Hermitian metric.  For such metrics, if they exist, the Chern scalar curvature would have the same geometric meaning as the Riemannian scalar curvature.
Recently, Liu-Yang showed that if this equivalence of scalar curvatures holds even in average over a compact Hermitian manifold, then the metric must in fact be K\"ahler.  However, we prove that a certain class of non-compact complex manifolds do admit Hermitian metrics for which this equivalence holds.  Subsequently, the question of to what extent the behavior of said metrics can be dictated is addressed and a classification theorem is proved.
\end{abstract}
\maketitle
\setcounter{tocdepth}{1}
\tableofcontents

\section{Introduction}
This article examines a relationship between scalar curvatures on Hermitian manifolds in the non-K\"ahler setting.  While there has been a plethora of work concerning the existence of certain classes of desirable K\"ahler metrics, there is markedly less understanding of what are ``choice'' non-K\"ahler Hermitian metrics.  For a K\"ahler manifold, the Chern connection on the holomorphic tangent bundle can be seen to coincide with Levi-Civita connection on the underlying Riemannian manifold, however, these can differ wildly for a non-K\"ahler Hermitian manifold.  
Our work here is inspired by the following broad question -- In the realm of non-K\"ahler Hermitian metrics on a complex manifold, do there exist some which induce a ``desirable relationship'' between the complex geometry and underlying real geometry?  Admittedly, this question is not only broad but also incredibly vague as it is not at all clear what ``desirable relationship'' should mean.  More specifically, what we wonder here, is whether some aspects of the respective geometries can coincide in the non-K\"ahler setting as they would in the K\"ahler setting.  One of the simplest such measures is that of scalar curvature.  On a Hermitian manifold, it is natural to consider the Riemannian and Chern scalar curvatures, i.e. those of the Levi-Civita connection on the underlying Riemannian manifold and the Chern connection respectively.  If the manifold is K\"ahler, then the Riemannian scalar curvature, $S$, is equal to twice the Chern scalar curvature, $S_C$.  This motivates the following definition.

\begin{definition}
{\em A Hermitian metric $g$, on a complex manifold $(M,J)$, is said to be a {\em K\"ahler like scalar curvature (Klsc) metric} if $S=2S_C$.} 
\end{definition}
From this, we ask whether a complex manifold can admit a non-K\"ahler Hermitian Klsc metric.  Since every Riemannian metric on a Riemann surface is K\"ahler, we are of course asking this for complex dimension $n\geq 2$. 
Formulas relating the Chern and Riemannian scalar curvatures on compact Hermitian manifolds were found in \cite{Gauduchon_scalar,Liu-Yang_2}, where it is subsequently shown that if $S$ and $2S_C$ are even equal in average over a compact manifold, which is clearly implied by Klsc, then the metric must in fact be K\"ahler.
Therefore, the actual question that we ask here is whether a non-compact complex manifold can admit
non-K\"ahler Hermitian Klsc metrics?

The Riemannian scalar curvature has a very clear geometric meaning in that it arises in the expansion of the volume of geodesic balls.  Specifically, for a Riemannian manifold $(M,g)$, the volume of the geodesic ball of radius $r$ around $p\in M$ has the asymptotic expansion
\begin{align}
\label{vol_expansion}
vol(B(p,r))=\omega_nr^n\Big(1-\frac{S(p)}{6(n+2)}r^2+\mathcal{O}(r^4)\Big),
\end{align}
where $\omega_n$ denotes the volume of the unit ball in $\RR^n$, see \cite{Besse}.  Hence, when the scalar curvature is positive or negative at a point, the volume of a small geodesic ball around that point is respectively smaller or larger than it would be for a Euclidean ball of the same radius.  If the metric is not K\"ahler, the Chern scalar curvature does not in general have such an obvious geometric interpretation.  However, if there were to exist a non-K\"ahler Hermitian Klsc metric, then its Chern scalar curvature would have such a meaning as $S(p)$ could be replaced with $2S_C(p)$ in the expansion \eqref{vol_expansion}.

In a different vein, there has been recent interest in the so called Chern-Yamabe problem.  Given a compact Hermitian manifold, this is the problem of finding a conformal deformation to a metric with constant Chern scalar curvature.  In \cite{Angella-Simone-Spotti}, it is proved that such conformal deformations exist in the case that the constant is non-positive.  (It is also worthwhile to note that an almost Hermitian version of the Yamabe problem was proved in \cite{delRio_Simanca}.)  One may ask when this problem overlaps with the usual Yamabe problem, which motivates the study of Klsc metrics.  In the compact case, recall that a Hermitian metric is a Klsc metric if and only if it is K\"ahler \cite{Gauduchon_scalar,Liu-Yang_2}.  In this paper, we study the Klsc condition in the non-compact $\U(n)$-invariant case, and both find and classify non-K\"ahler solutions.  In turn, this raises a very interesting question regarding the singular Yamabe problem and its Chern-scalar curvature analogue, see \eqref{singular_yamabe} in Section \eqref{questions} for more details.

We answer the question of existence of non-K\"ahler Hermitian Klsc metrics in the affirmative, by using conformal techniques, in Section \ref{conformal_method} below.  Interestingly, this shows that the obstruction in the compact case is global as opposed to local.  The idea here is to begin with a Hermitian metric and search for a non-K\"ahler Hermitian Klsc metric in its conformal class while keeping the complex structure fixed.  While this idea would be interesting to pursue in general, in order to make our work here more tractable, we have restricted our attention to the situation where the initial metric is a $\U(n)$-invariant K\"ahler metric.  This restriction not only enables us to find the desired metric in each conformal class explicitly, observe Theorem \ref{thm_1} below, but also lends itself to posing, and subsequently answering, questions involving the ability to specify the behavior of non-K\"ahler Klsc metrics as well as allows us to give a complete description of their moduli space.  These results are discussed in Section \ref{specifying}, where a detailed analysis of the regularity of these metrics is also given.

While background material and our conventions are discussed more thoroughly in Section \ref{background}, it is necessary to make a brief remark on notation here.  On $\CC^n$ we let $J_0$ denote the standard complex structure and define the radial variable $\z=\sum_{i=1}^{n}|z_i|^2$.  We will frequently consider Hermitian metrics on annuli of the form
\begin{align}
\mathcal{A}_{\alpha,\beta}=\{(z_1,\cdots,z_n)\in\CC^n:0\leq\alpha<\z<\beta\}.
\end{align}
Here $\sqrt{\alpha}$ and $\sqrt{\beta}$ are the radii of the annulus $\mathcal{A}_{\alpha,\beta}$.
Often, only the metric will be written, or it being non-K\"ahler Klsc referred to, and the fact that it is Hermitian with respect to $J_0$ will be implicit from context. 

\subsection{The conformal method}
\label{conformal_method}
In this section, we give an existence result for non-K\"ahler Hermitian Klsc metrics.
We begin by considering $\U(n)$-invariant K\"ahler metrics, i.e., those arising from some K\"ahler potential $\phi(\z)$, which is just a function of the radial variable $\z$,
on the annulus $\mathcal{A}_{\alpha,\beta}\subset (\CC^n,J_0)$.
Then, we seek a conformal factor, which can be understood in terms of the initial metric, that yields a non-K\"ahler Klsc metric.  Here the complex structure is fixed, so the resulting metric would be Hermitian with respect to $J_0$ as well.  In the following theorem, we see that such a conformal factor always exists.
\begin{theorem}
\label{thm_1}
Let $g$ be a $\U(n)$-invariant K\"ahler metric, with K\"ahler potential $\phi(\z)$, on the annulus $\mathcal{A}_{\alpha,\beta}\subset(\CC^n,J_0)$, for $n\geq 2$.  Then, the conformal metric $(v^2g, J_0)$, where
\begin{align}
\begin{split}
\label{conformal_factor}
\phantom{=}v=\Big(\int\frac{1}{\z^n(\phi')^{n-1}}d\z\Big)^{\frac{1}{2n-1}},
\end{split}
\end{align}
is a non-K\"ahler Hermitian Klsc metric.  Furthermore, up to scale and the constant of integration, this is the unique $\U(n)$-invariant conformal factor with this property.
\end{theorem}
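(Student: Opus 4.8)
The plan is to turn the pointwise identity $S=2S_C$ for the sought metric $\tilde g=v^2g$ into a single scalar equation for $v$ and then exploit $\U(n)$-invariance to reduce it to an ODE that can be integrated in closed form. First I would record the conformal transformation laws under $\tilde g=e^{2f}g$ on a manifold of real dimension $2n$: the Riemannian law
\[
\tilde S=e^{-2f}\big(S-2(2n-1)\Delta_g f-(2n-1)(2n-2)\,g^{ab}\partial_a f\,\partial_b f\big),
\]
valid for any conformal change, and the Chern law $\tilde S_C=e^{-2f}\big(S_C-2n\,\Delta_C f\big)$, which follows from $\log\det(\tilde g_{i\bar j})=2nf+\log\det(g_{i\bar j})$ and hence $\tilde R_{i\bar j}=R_{i\bar j}-2n\,\partial_i\partial_{\bar j}f$. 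Here $\Delta_g$ is the Laplace--Beltrami operator and $\Delta_C f=g^{i\bar j}\partial_i\partial_{\bar j}f$. Because $g$ is K\"ahler I may invoke the identities $S=2S_C$, $\Delta_g f=2\Delta_C f$, and $g^{ab}\partial_a f\,\partial_b f=2\,g^{i\bar j}\partial_i f\,\partial_{\bar j}f$. Substituting these into $\tilde S=2\tilde S_C$, the terms $2S_C$ cancel, and after dividing by the common factor $4(n-1)$ (permissible since $n\geq2$) the Klsc condition becomes
\[
-\Delta_C f=(2n-1)\,g^{i\bar j}\partial_i f\,\partial_{\bar j}f,\qquad f=\log v.
\]

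The step I expect to be the crux is recognizing that this semilinear equation linearizes. Writing $v=w^{a}$ and using $\Delta_C\log v=\Delta_C v/v-g^{i\bar j}\partial_i v\,\partial_{\bar j}v/v^2$, a short computation shows that the first-order (gradient) terms cancel identically precisely when $a=\tfrac{1}{2n-1}$, and the equation collapses to $\Delta_C w=0$. Thus, up to the normalizing exponent $\tfrac{1}{2n-1}$ appearing in \eqref{conformal_factor}, the entire Klsc problem is equivalent to finding a $\U(n)$-invariant Chern-harmonic function $w=v^{2n-1}$.

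It then remains to solve $\Delta_C w=0$ for a radial $w(\z)$. I would use the determinant identity $\det(g_{i\bar j})=(\phi')^{n-1}(\phi'+\z\phi'')$ for $g_{i\bar j}=\phi'\delta_{ij}+\phi''\bar z_i z_j$, together with the radial complex Laplacian
\[
\Delta_C w=\frac{w'+\z w''}{\phi'+\z\phi''}+(n-1)\frac{w'}{\phi'},
\]
both of which come from the common eigenbasis of this rank-one perturbation of $\phi'\,\mathrm{Id}$. Setting $\Delta_C w=0$ yields a first-order linear ODE for $p=w'$ whose integrating factor gives $p=C\,\z^{-n}(\phi')^{-(n-1)}$; integrating once more recovers $w=C\int \z^{-n}(\phi')^{-(n-1)}\,d\z+D$, and hence $v=w^{1/(2n-1)}$ is exactly the expression in \eqref{conformal_factor}.

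Finally I would dispatch the two remaining claims. For non-K\"ahlerness, let $\omega$ denote the K\"ahler form of $g$; since $d\omega=0$ one has $d(v^2\omega)=d(v^2)\wedge\omega$, and for $n\geq2$ the Lefschetz map $\alpha\mapsto\alpha\wedge\omega$ is injective on $1$-forms, so $v^2\omega$ is closed only if $v$ is constant. A non-constant integral therefore produces a genuinely non-K\"ahler Hermitian metric. For uniqueness, the linear structure uncovered above is decisive: the radial solutions of $\Delta_C w=0$ form the two-parameter family $w=C\,W_0+D$ with $W_0=\int \z^{-n}(\phi')^{-(n-1)}\,d\z$, in which $D$ is the constant of integration and $C$ corresponds to an overall rescaling of the metric, while $C\neq0$ is forced by the non-K\"ahler (equivalently, non-constant $v$) requirement. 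Hence, up to scale and the constant of integration, $v$ is the unique $\U(n)$-invariant Klsc conformal factor. The principal obstacle in all of this is locating the linearizing substitution and setting up the two conformal laws in matching conventions; once $\Delta_C w=0$ emerges, the rest is routine integration.
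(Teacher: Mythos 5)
Your proof is correct, and although it shares the paper's overall skeleton (conformally deform a $\U(n)$-invariant K\"ahler metric, equate the two conformal transformation laws, exploit K\"ahlerness to cancel the zeroth-order curvature terms, reduce to a radial ODE, integrate explicitly), it differs from the paper's execution at three genuinely interesting points. (i) You obtain the Chern law $\widetilde{S_C}=e^{-2f}\big(S_C-2n\,g^{i\bar j}\partial_i\partial_{\bar j}f\big)$ in full generality from $\det(\tilde g_{i\bar j})=e^{2nf}\det(g_{i\bar j})$, whereas the paper extracts the same formula, in the $\U(n)$-invariant setting only, from the identity \eqref{S_C_relationship_1} via a wedge-product computation along the $z_1$-axis. (ii) The heart of your argument --- that the Klsc equation $-\Delta_C f=(2n-1)g^{i\bar j}\partial_i f\,\partial_{\bar j}f$, $f=\log v$, is linearized exactly by $w=v^{2n-1}$, so the whole problem is equivalent to finding a positive radial solution of $\Delta_C w=0$ --- is where you really diverge: the paper instead runs Riccati-type substitutions $y=u'/u$, $w=1/y$ on the radial ODE \eqref{eq_3} and integrates twice, arriving at $u=C\big|\int\z^{-n}(\phi')^{1-n}d\z\big|^{\frac{n-1}{2n-1}}$, which is your $w=v^{2n-1}$ in disguise. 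Your framing buys two things: it demystifies the paper's own remark that finding an explicit solution is ``somewhat surprising'' (the equation is secretly linear), and it makes the uniqueness clause immediate, since the radial kernel of the second-order linear operator $\Delta_C$ is exactly the two-parameter family $C\int\z^{-n}(\phi')^{1-n}d\z+D$, with $C$ an overall scale of the metric, $D$ the constant of integration, and $C\neq 0$ forced by non-K\"ahlerness; the paper's route, by contrast, implicitly divides by $y^2=(u'/u)^2$ and must track constants through two successive integrations. (iii) For non-K\"ahlerness you invoke injectivity of $\alpha\mapsto\alpha\wedge\omega$ on $1$-forms (valid since $n\geq 2$), which is coordinate-free and requires no symmetry, while the paper argues along the $z_1$-axis that no radial potential $\psi$ can satisfy both $\psi'=v^2\phi'$ and $(\z\psi')'=v^2(\z\phi')'$ unless $v$ is constant. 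All your intermediate formulas check against the paper's: your conformal laws agree with \eqref{riemannian_change} and \eqref{chern_change} after reparametrizing $e^{2f}=u^{2/(n-1)}$, your radial complex Laplacian is \eqref{laplacian} up to the K\"ahler factor of $2$, and integrating $\Delta_C w=0$ gives $w'=C\z^{-n}(\phi')^{1-n}$, hence exactly \eqref{conformal_factor}.
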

This is proved in Section \ref{thm_1_proof}.

\begin{remark}
{\em Since the initial metric is K\"ahler, the constant of integration, which is suppressed in \eqref{conformal_factor}, can always be chosen so that the conformal factor does not vanish on the given annulus $\mathcal{A}_{\alpha,\beta}$.  In general, however, the constant of integration can cause the conformal factor to vanish on the sphere of radius $\gamma$ for at most one isolated value of $\z=\gamma\in (\alpha,\beta)$.  (It can cause it to vanish at most once because $\phi$ is a K\"ahler potential on the annulus.)  In this case, the annulus splits into two sub-annuli, $\mathcal{A}_{\alpha,\beta}=\mathcal{A}_{\alpha,\gamma}\amalg\mathcal{A}_{\gamma,\beta}$, and a non-K\"ahler Hermitian Klsc metric is obtained on each.}
\end{remark}

\begin{remark}
{\em
Although a compact Hermitian manifold cannot admit a non-K\"ahler Klsc metric, recall \cite{Gauduchon_scalar,Liu-Yang_2} as mentioned above, Theorem \ref{thm_1} can still be applied in the compact setting in order to obtain non-K\"ahler Klsc metrics on an annular decomposition of the underlying manifold minus closed sets of positive codimension.  Here, by the compact setting, we
we mean a compact manifold obtained by completing $\CC^n\setminus\{0\}$ by adding in a point or a
$\CP^{n-1}$ at $\{0\}$ and $\{\infty\}$ with a K\"ahler metric coming from extending a $\U(n)$-invariant K\"ahler metric on $\CC^n$ over these compactifications, e.g., the Fubini-Study metric on $\CP^n$.  See Section \ref{Fubini-Study} for an example.
}
\end{remark}

\subsection{Moduli space and behavior specification}
\label{specifying}
Here, we determine the moduli space of $\U(n)$-invariant non-K\"ahler Hermitian Klsc metrics on an annulus as well as address the question of 
to what extent the behavior of such metrics can be dictated.  Since the standard metric on $S^{2n-1}$ decomposes into the Fubini-Study metric on $\CP^{n-1}$ and metric along the Hopf fiber, which we denote by 
$g_{\CP^{n-1}}$ and $h$ respectively, any $\U(n)$-invariant metric on the annulus $\mathcal{A}_{\alpha,\beta}\subset\CC^n$ can be written
as
\begin{align}
\label{U(n)-invariant_g}
g=\mathscr{E}(\z)\cdot\Big((d\sqrt{\z})^2+\z h\Big)+\z \mathscr{F}(\z)\cdot\big(g_{\CP^{n-1}}\big),
\end{align}
where $\mathscr{E}(\z)$ and $\mathscr{F}(\z)$ are smooth positive functions of the radial variable $\z$ on $\mathcal{A}_{\alpha,\beta}$.  Therefore, we ask whether it is possible to specify such an $\mathscr{E}(\z)$ or $\mathscr{F}(\z)$ and obtain a non-K\"ahler Hermitian Klsc metric of the form \eqref{U(n)-invariant_g}.  We will see,  in Theorem \ref{thm_2} below, that if the specified function satisfies a certain broad admissibility condition, then fixing one of either $\mathscr{E}(\z)$ or $\mathscr{F}(\z)$ along with a constant determines a unique such metric.  From this, we will then be able to completely describe the moduli space.  For certain analytic reasons, which will be apparent in the proofs, we will choose to fix such an $\mathscr{F}(\z)$ and recover an $\mathscr{E}(\z)$.  We begin with an admissibility requirement for the specified function,  which will ensure that we do, in fact, obtain metrics in Theorem~\ref{thm_2}, Theorem \ref{thm_3} and Corollay \ref{cor_thm_3} below.

\begin{definition}
{\em Let $\mathscr{F}(\z)$ be a smooth positive function on the annulus $\mathcal{A}_{\alpha,\beta}\subset\CC^n$, and fix a constant $\mathcal{C}$.  The pair $(\mathscr{F},\mathcal{C})$ is said to be {\em admissible} on $\mathcal{A}_{\alpha,\beta}$ if
\begin{align*}
\z\mathscr{F}\cdot\Big(\frac{1}{2n-1} \int\frac{1}{\z^n \mathscr{F}^{n-1}}d\z+\mathcal{C}\Big)^{-2}
\end{align*}
is strictly increasing there.}
\end{definition}
\begin{remark}
{\em Although this admissibility condition may seem unusual, intuitively it can be thought of as the weakening of the condition that $(\z \mathscr{F})'>0$, see Remark~\ref{admissible_remark} for a more in depth explanation.  In turn, we will see that there is a tremendously wide range of functions which satisfy this admissibility condition.
}
\end{remark}

Now, we are able to answer the question of specifying the behavior of the non-K\"ahler Klsc metric posed above, as well as completely describe the moduli space.

\begin{theorem}
\label{thm_2}
To each admissible pair $(\mathscr{F},\mathcal{C})$ on the annulus $\mathcal{A}_{\alpha,\beta}\subset~(\CC^n,J_0)$, for $n\geq 2$, there exists a unique associated $\U(n)$-invariant non-K\"ahler Hermitian Klsc metric given by
\begin{equation*}
g_{(\mathscr{F},\mathcal{C})}=\Bigg((\z\mathscr{F})'-\frac{2}{\z^{n-1}\mathscr{F}^{n-2}}\Big(\int\frac{1}{\z^n\mathscr{F}^{n-1}}d\z+\mathcal{C}\Big)^{-1}\Bigg)\Big((d\sqrt{\z})^2+\z h\Big)+(\z \mathscr{F})\cdot g_{\CP^{n-1}}.
\end{equation*}
Moreover, the moduli space of $\U(n)$-invariant non-K\"ahler Hermitian Klsc metrics on $(\mathcal{A}_{\alpha,\beta},J_0)$ is given by all such metrics as $(\mathscr{F},\mathcal{C})$ ranges over all admissible pairs there.
\end{theorem}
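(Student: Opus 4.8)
The plan is to reduce everything to Theorem \ref{thm_1} together with one structural observation: every positive-definite $\U(n)$-invariant Hermitian metric on $\mathcal{A}_{\alpha,\beta}$ is conformal to a $\U(n)$-invariant K\"ahler metric. Writing a $\U(n)$-invariant K\"ahler metric with potential $\phi$ in the form \eqref{U(n)-invariant_g}, one computes that its coefficients are $\mathscr{E}_0=(\z\phi')'$ and $\mathscr{F}_0=\phi'$, so that the K\"ahler condition is precisely $\mathscr{E}=(\z\mathscr{F})'$. Given an arbitrary metric \eqref{U(n)-invariant_g} with coefficients $\mathscr{E},\mathscr{F}$, seeking $g=\lambda^2 g_\phi$ forces $\phi'=\mathscr{F}/\lambda^2$ together with the first-order linear equation $\lambda'/\lambda=\big((\z\mathscr{F})'-\mathscr{E}\big)/(2\z\mathscr{F})$, which is solvable for $\lambda$ up to a positive multiplicative constant since $\mathscr{F}>0$. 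Because $\mathscr{E}>0$, the recovered $\phi$ satisfies $\phi'>0$ and $(\z\phi')'=\mathscr{E}/\lambda^2>0$, so $g_\phi$ is a genuine $\U(n)$-invariant K\"ahler metric.

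For existence I would start from an admissible pair $(\mathscr{F},\mathcal{C})$ and reconstruct the K\"ahler metric of which $g_{(\mathscr{F},\mathcal{C})}$ should be a conformal multiple. Define $v=\frac{1}{2n-1}\int \z^{-n}\mathscr{F}^{-(n-1)}\,d\z+\mathcal{C}$, set $\phi'=\mathscr{F}/v^2$, and integrate to recover $\phi$. The crux is to check that this $v$ is exactly the conformal factor produced by Theorem \ref{thm_1} for the reconstructed $\phi$. This follows from the substitution identity
\[
\frac{1}{\z^{n}\mathscr{F}^{n-1}}=W'\,W^{-\frac{2(n-1)}{2n-1}},\qquad W:=\int \frac{1}{\z^{n}(\phi')^{n-1}}\,d\z,
\]
obtained from $\mathscr{F}=v^2\phi'$ and $v=W^{1/(2n-1)}$; integrating the right-hand side gives $\int \z^{-n}\mathscr{F}^{-(n-1)}\,d\z=(2n-1)W^{1/(2n-1)}$, i.e.\ $v=W^{1/(2n-1)}$ up to the constant, which is precisely \eqref{conformal_factor}. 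Theorem \ref{thm_1} then guarantees that $v^2g_\phi$ is a non-K\"ahler Hermitian Klsc metric, and a direct computation of its coefficients, using $\mathscr{E}=v^2(\z\phi')'=(\z\mathscr{F})'-2\z\mathscr{F}\,(v'/v)$ together with $v'=\frac{1}{2n-1}\z^{-n}\mathscr{F}^{-(n-1)}$, yields exactly the formula for $g_{(\mathscr{F},\mathcal{C})}$ (after absorbing the factor $2n-1$ into the constant). Finally, I would observe that the admissibility condition is the K\"ahler positivity condition in disguise: the quantity in the Definition equals $\z\mathscr{F}/v^2=\z\phi'$, so its strict monotonicity is exactly $(\z\phi')'>0$, which both makes $g_\phi$ a legitimate K\"ahler metric and forces $\mathscr{E}>0$, so that $g_{(\mathscr{F},\mathcal{C})}$ is positive-definite (away from the at most one zero of $v$ discussed in the remark following Theorem \ref{thm_1}).

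For the classification I would run this argument in reverse. Let $g$ be any $\U(n)$-invariant non-K\"ahler Hermitian Klsc metric; read off $\mathscr{F}$ from \eqref{U(n)-invariant_g} and write $g=\lambda^2 g_\phi$ for the K\"ahler metric supplied by the structural observation. Since $g$ is Klsc, $\lambda^2$ is a $\U(n)$-invariant conformal factor rendering $g_\phi$ Klsc, so by the uniqueness clause of Theorem \ref{thm_1} it coincides, up to scale and the constant of integration, with the factor \eqref{conformal_factor} of $g_\phi$; that is, $\lambda=v$ for a suitable constant. The existence computation then shows that $\mathscr{E}$ has exactly the stated form for the corresponding $\mathcal{C}$, and that $(\mathscr{F},\mathcal{C})$ is admissible because $(\z\phi')'>0$, whence $g=g_{(\mathscr{F},\mathcal{C})}$. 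Conversely, for fixed $\mathscr{F}$ the map $\mathcal{C}\mapsto\mathscr{E}$ is injective, so distinct admissible pairs give distinct metrics and the correspondence $(\mathscr{F},\mathcal{C})\leftrightarrow g_{(\mathscr{F},\mathcal{C})}$ is a bijection onto the moduli space.

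I expect the main obstacle to be the substitution step, namely verifying that the $v$ built directly from $\mathscr{F}$ agrees with the Theorem \ref{thm_1} conformal factor of the reconstructed potential, i.e.\ correctly inverting the nonlinear relation $\mathscr{F}=v^2\phi'$ with $v=\big(\int \z^{-n}(\phi')^{-(n-1)}\,d\z\big)^{1/(2n-1)}$. This single identity is what simultaneously produces the explicit formula, identifies admissibility with K\"ahler positivity, and powers both the existence and the classification.
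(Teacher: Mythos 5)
Your proposal is correct and follows essentially the same route as the paper: you recover $\phi'=\mathscr{F}/v^2$ with $v=\frac{1}{2n-1}\int\z^{-n}\mathscr{F}^{-(n-1)}d\z+\mathcal{C}$ (the paper's Proposition \ref{phi'_prop}), identify admissibility with the K\"ahler positivity $(\z\phi')'>0$ (Proposition \ref{admissible_proposition}), apply Theorem \ref{thm_1} and compute coefficients for existence, and combine the conformal-to-K\"ahler structural fact \eqref{second_conformal} with Theorem \ref{thm_1}'s uniqueness for the classification. The only cosmetic caveat is that your substitution identity should be derived in the direction $W'=\z^{-n}(\phi')^{-(n-1)}=(v^{2n-1})'$, so $W=v^{2n-1}$ up to the integration constant, rather than quoted as following from $v=W^{1/(2n-1)}$, which is what it is meant to establish.
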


This is proved in Section \ref{pf_thm_2}.  In other words, what we find here is that the moduli space of $\U(n)$-invariant non-K\"ahler Hermitian Klsc metrics on an annulus is equivalent to the space of admissible pairs there.  Each $\mathscr{F}(\z)\in C^{\infty}(\mathcal{A}_{\alpha,\beta})$, that has some constant with which it forms an admissible pair, in fact forms an admissible pair exactly with an open interval of constants (this is seen easily by examining the admissibility condition).  To each such $\mathscr{F}(\z)$, there is an associated $1$-parameter family of non-K\"ahler Klsc metrics in the moduli space given by $g_{(\mathscr{F},\mathcal{C})}$ as $\mathcal{C}$ ranges over that interval of constants for which the pair $(\mathscr{F},\mathcal{C})$ is admissible.  Therefore, the moduli space can be viewed as a collection of these $1$-parameter families of metrics over the set of functions which have the ability to be admissible.  

Given this classification of the $\U(n)$-invariant non-K\"ahler Hermitian Klsc metrics on an annulus, it is reasonable to ask about their boundary behavior.  In particular, do the spaces admitting these non-K\"ahler Klsc metrics necessarily have two open ends?  In the following result, we see that if the specified $\mathscr{F}(\z)$ is smooth on all of $\CC^n$, then we obtain a non-K\"ahler Klsc metric on $\CC^n$ with a canonical asymptotically cone-like singularity at the origin in that, near the origin, after a change of variables it looks like a canonical model plus higher order terms (which we abbreviate by {\em h.o.t.}).  After this change of variables, however, the coefficients of these higher order terms may not always extend smoothly over the origin.  We describe the regularity of such a metric by saying that it is of class $C^{\ell}$, where $C^{\ell}$ is the lowest regularity that any coefficient of the higher order terms has, in the appropriate variable, over the origin.

\begin{theorem}
\label{thm_3}
Let $(\mathscr{F},\mathcal{C})$ be an admissible pair on the annulus $\mathcal{A}_{0,\infty}=\CC^n\setminus\{0\}$, for $n\geq 2$, where $\mathscr{F}(\z)$ is a smooth function on $\CC^n$ whose first nonzero term in the Taylor expansion about the origin is of order $k$.  Consider the uniquely associated
$\U(n)$-invariant non-K\"ahler Hermitian Klsc metric $g_{(\mathscr{F},\mathcal{C})}$ of Theorem \ref{thm_2}.
\begin{enumerate}
\item The metric $g_{(\mathscr{F},\mathcal{C})}$ extends to $(\CC^n,J_0)$ with a canonical asymptotically cone-like singularity at the origin in the sense that there exists a change of variables so that near the origin the metric is given by
\begin{align*}
g_{(\mathscr{F},\mathcal{C})}=dr^2+\Big(\frac{k+1}{2n-1}\Big)r^2\Big(g_{\CP^{n-1}}+(k+1)(2n-1)h\Big)+h.o.t..
\end{align*}
\item
Furthermore, the class of this extension over the origin is described as follows:
\begin{itemize}
\item When $k=0$ or $k=1$, this metric is precisely of class
\begin{equation*}
\begin{cases}
C^{\infty}\phantom{=}&\text{if}\phantom{=}\frac{\partial^{(k+1)(n-1)}}{\partial\z^{(k+1)(n-1)}}\big(\frac{\z^k}{\mathscr{F}}\big)^{n-1}\big|_{\z=0}=0\\
C^{2n-3}\phantom{=}&\text{otherwise}
\end{cases}.
\end{equation*}
\item When $k\geq 2$, this metric is, a priori, of class $C^0$.  
\end{itemize}
\end{enumerate}
\end{theorem}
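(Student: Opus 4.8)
The plan is to extract everything from a single expansion of the coefficient $\mathscr{E}(\z)$ of $(d\sqrt{\z})^2+\z h$ in Theorem~\ref{thm_2}, since the coefficient $\z\mathscr{F}$ of $g_{\CP^{n-1}}$ is already smooth in $\z$. Writing $\mathscr{F}(\z)=\z^k u(\z)$ with $u$ smooth and $u(0)=a_k>0$, the integrand $\tfrac{1}{\z^n\mathscr{F}^{n-1}}$ appearing in Theorem~\ref{thm_2} becomes $\z^{-N}w(\z)$, where $w=(\z^k/\mathscr{F})^{n-1}=u^{-(n-1)}$ is smooth and $N=(k+1)(n-1)+1$. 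First I would expand $w(\z)=\sum_j w_j\z^j$ and integrate term by term: the principal singular part is $\tfrac{w_0}{1-N}\z^{-(N-1)}(1+O(\z))$, while a term $w_{N-1}\log\z$ is produced exactly from the $\z^{N-1}$-coefficient of $w$. Substituting the resulting expansion of the integral and its inverse into the definition of $\mathscr{E}$, and simplifying, gives the leading behavior $\mathscr{E}(\z)=(2n-1)(k+1)a_k\,\z^k\,(1+O(\z))$ together with a logarithmic remainder whose order I track below.

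For part (1), I would pass to the geodesic radial variable. With $\rho=\sqrt{\z}$, set $r=\int_0^{\rho}\sqrt{\mathscr{E}(s^2)}\,ds$, so that $dr^2=\mathscr{E}\,(d\sqrt{\z})^2$ exactly and $g=dr^2+\z\mathscr{E}\,h+\z\mathscr{F}\,g_{\CP^{n-1}}$. The leading asymptotics give $r^2=\z^{k+1}v(\z)$ with $v$ smooth and $v(0)=\tfrac{(2n-1)a_k}{k+1}$, hence $\z^{k+1}=v(\z)^{-1}r^2$. Feeding this into $\z\mathscr{E}=\z^{k+1}(\mathscr{E}/\z^k)$ and $\z\mathscr{F}=\z^{k+1}u$ yields the leading coefficients $(k+1)^2 r^2$ and $\tfrac{k+1}{2n-1}r^2$; factoring $\tfrac{k+1}{2n-1}r^2$ out of the angular part produces exactly $\tfrac{k+1}{2n-1}r^2\big(g_{\CP^{n-1}}+(k+1)(2n-1)h\big)$, which is the claimed canonical cone, the remaining contributions being collected as h.o.t.

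For part (2) the decisive point is the identification of the logarithmic coefficient. Because the $\log\z$ is created precisely at the $\z^{N-1}$-coefficient of $w=(\z^k/\mathscr{F})^{n-1}$ and $N-1=(k+1)(n-1)$, this coefficient $w_{N-1}$ is a nonzero multiple of $\frac{\partial^{(k+1)(n-1)}}{\partial\z^{(k+1)(n-1)}}(\z^k/\mathscr{F})^{n-1}\big|_{\z=0}$, which is exactly the quantity in the statement. I would then show that this logarithm propagates, through $\mathscr{E}$ and through $v$, into the reduced angular coefficients $\z\mathscr{E}/r^2$ and $\z\mathscr{F}/r^2$ as a term of the form $c\,\z^{(k+1)(n-1)}\log\z$. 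Since $r\sim\z^{(k+1)/2}$, this is comparable to $r^{2n-2}\log r$, which lies in $C^{2n-3}\setminus C^{2n-2}$; hence when the displayed derivative is nonzero the coefficients of the higher order terms (measured in the variable $r$) are precisely $C^{2n-3}$, and when it vanishes the logarithm is absent and one is left with smooth data in $\z$.

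Finally I would separate the regimes by the regularity of the inverse change of variables $\z=\z(r)$. For $k=0$ and $k=1$ the relation $r^2=\z^{k+1}v(\z)$ is smoothly invertible, giving $\z$ as a smooth function of $r^2$ when $k=0$ and of $r$ when $k=1$; so in the absence of the logarithm every coefficient is smooth, yielding $C^{\infty}$, and otherwise the $r^{2n-2}\log r$ term forces exactly $C^{2n-3}$. For $k\ge 2$, by contrast, $\z(r)\sim r^{2/(k+1)}$ is only H\"older continuous, so composing the (smooth) coefficient functions with $\z(r)$ already destroys differentiability and only $C^0$ can be guaranteed a priori. I expect the main obstacle to be the \emph{sharpness} of these claims: one must verify that the logarithmic term genuinely survives every algebraic cancellation occurring in the formation of $\mathscr{E}$, $v$, and the reduced coefficients, so that the metric is no better than $C^{2n-3}$, and that no lower-order non-smooth contribution intrudes below the $r^{2n-2}\log r$ order. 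In other words, the careful order-by-order bookkeeping through the non-smooth inversion $\z=\z(r)$ is the technical heart of the argument.
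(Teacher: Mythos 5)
Your proposal is correct and follows essentially the same route as the paper: expand $\int \z^{-n}\mathscr{F}^{1-n}\,d\z$ so that the logarithm arises exactly from the $\z^{(k+1)(n-1)}$-coefficient of $(\z^k/\mathscr{F})^{n-1}$, change variables with $r\sim \z^{(k+1)/2}$ to exhibit the canonical cone, and read off the regularity from the fractional powers of $r$ when $k\geq 2$ and from the $r^{2n-2}\log r$ term (class $C^{2n-3}$) otherwise. The only differences are cosmetic — you use the exact geodesic radial coordinate where the paper uses the algebraic substitution $r^2=\tfrac{2n-1}{k+1}\tfrac{\mathscr{F}^{(k)}(0)}{k!}\z^{k+1}$ (so your $v(\z)$ is in fact smooth only up to the very log corrections you later track), and the no-cancellation check you flag as the technical heart is carried out in the paper by precisely the geometric-series bookkeeping you describe: the lowest-order log coefficient of the $\big((d\sqrt{\z})^2+\z h\big)$ component is a nonzero multiple of the stated derivative, so it survives.
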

This is proved in Section \ref{pf_thm_3}.
\begin{remark}
\label{regularity_remark}
{\em When $k\geq 2$, the metrics of Theorem \ref{thm_3} can only, a priori, be guaranteed to be of class $C^0$ over the origin, however, in general such a metric may be of a class with much higher regularity.
This will depend upon the lowest fractional power of the variable $r$ that appears in a certain expansion of the coefficients of the higher order terms, and upon whether the derivative $\frac{\partial^{(k+1)(n-1)}}{\partial\z^{(k+1)(n-1)}}\big(\frac{\z^k}{\mathscr{F}}\big)^{n-1}\big|_{\z=0}$ vanishes.  When $k\geq2$, it is actually possible for such a metric to be of class $C^{\infty}$ as well.
See Section~\ref{regularity_proof}, in particular Remark \ref{k>2Cinfty}, for a thorough explanation, and Section \ref{fix_2} for an example.
}
\end{remark}

The singularity at the origin of the non-K\"ahler Klsc metrics,
obtained in Theorem~\ref{thm_3}, depends only upon the complex dimension and  the order of the first nonzero term of the Taylor expansion of $\mathscr{F}$ at the origin.  In particular, notice that if $\mathscr{F}(0)>0$, so $k=0$, then then the singularity
is determined by the complex dimension alone.
Similarly, given certain relationships between $k$ and $n$, we can obtain non-K\"ahler Klsc metrics which are asymptotic to a cone over a scaled $S^{2n-1}$, and in some cases even nonsingular non-K\"ahler Klsc metrics.  Specifically, we have the following corollary. 
\begin{corollary}
\label{cor_thm_3}
Let $(\mathscr{F},\mathcal{C})$ be an admissible pair on the annulus $\mathcal{A}_{0,\infty}=\CC^n\setminus~\{0\}$, for $n\geq 2$, where $\mathscr{F}(\z)$ is a smooth function on $\CC^n$ whose first nonzero term in the Taylor expansion about the origin is of order $k$.  If $(k+1)(2n-1)=\ell^2$, for some positive integer $\ell$, then the uniquely associated $\U(n)$-invariant non-K\"ahler Hermitican Klsc metric $g_{(\mathscr{F},\mathcal{C})}$ on $(\CC^n, J_0)$, of Theorem~\ref{thm_3},
descends via a $\mathbb{Z}/\ell\mathbb{Z}$ quotient to
\begin{align*}
\overline{g}_{(\mathscr{F},\mathcal{C})}=dr^2+\Big(\frac{k+1}{2n-1}\Big)r^2g_{S^{2n-1}}+h.o.t..
\end{align*}
In particular, if $k=2n-2$, then 
\begin{align*}
\overline{g}_{(\mathscr{F},\mathcal{C})}=dr^2+r^2g_{S^{2n-1}}+h.o.t.
\end{align*}
is a nonsingular Klsc metric.  
\end{corollary}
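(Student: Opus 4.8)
The plan is to deduce everything from the asymptotic normal form already supplied by Theorem \ref{thm_3}, so that the only genuine work is to understand the effect of a cyclic quotient on the leading cone term. First I would recall that, by Theorem \ref{thm_3}(1), after the relevant change of variables there is a neighborhood of the origin on which
\[
g_{(\mathscr{F},\mathcal{C})}=dr^2+\Big(\frac{k+1}{2n-1}\Big)r^2\Big(g_{\CP^{n-1}}+(k+1)(2n-1)h\Big)+h.o.t.,
\]
so that the link of the leading cone is the Berger-type sphere $g_{\CP^{n-1}}+\ell^2 h$ in which the Hopf fiber has been stretched by the factor $\ell=\sqrt{(k+1)(2n-1)}$. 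The hypothesis $(k+1)(2n-1)=\ell^2$ is used at exactly this point: it is what guarantees that the stretching factor is a positive integer, which is what makes a $\mathbb{Z}/\ell\mathbb{Z}$ action available in the first place.

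Next I would introduce the action of $\mathbb{Z}/\ell\mathbb{Z}\subset\U(1)$ on $(\CC^n,J_0)$ given by $z\mapsto e^{2\pi i/\ell}z$, i.e.\ the order-$\ell$ subgroup of the Hopf circle. Since $g_{(\mathscr{F},\mathcal{C})}$ is $\U(n)$-invariant, and in particular invariant under this diagonal circle, the action is by isometries and the metric descends to the quotient. The key computation is then purely local and takes place along the Hopf fiber: writing $h=\eta^2$ for a connection one-form $\eta=d\theta+\alpha$ with $\theta\in\RR/2\pi\ZZ$, the quotient restricts $\theta$ to $\RR/(2\pi/\ell)\ZZ$, and after rescaling to a standard fiber coordinate $\tilde\theta=\ell\theta$ the fiber term $(k+1)^2\eta^2$ turns into $\tfrac{(k+1)^2}{\ell^2}\tilde\eta^2=\tfrac{k+1}{2n-1}\tilde\eta^2$. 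Because the base factor $\tfrac{k+1}{2n-1}\,g_{\CP^{n-1}}$ and the radial term $dr^2$ are untouched, the leading term of the descended metric is precisely
\[
dr^2+\Big(\frac{k+1}{2n-1}\Big)r^2\big(g_{\CP^{n-1}}+\tilde h\big)+h.o.t.=dr^2+\Big(\frac{k+1}{2n-1}\Big)r^2 g_{S^{2n-1}}+h.o.t.,
\]
which is the asserted form of $\overline{g}_{(\mathscr{F},\mathcal{C})}$; in effect the quotient is exactly the operation that un-stretches the Hopf fiber back into the round proportions relative to the base.

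For the final assertion I would specialize $k=2n-2$. Then $k+1=2n-1$ forces the overall coefficient $\tfrac{k+1}{2n-1}$ to equal $1$ and $\ell=2n-1$, so the displayed leading term collapses to the flat cone $dr^2+r^2 g_{S^{2n-1}}$. Nonsingularity over the origin then follows by feeding this case into the regularity analysis of Theorem \ref{thm_3}(2) together with Remark \ref{regularity_remark}, which is what is needed to control the higher order terms and conclude that the descended metric extends as a genuine metric rather than one carrying a residual cone-like defect.

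The step I expect to be the main obstacle is precisely this last one. Once the leading term is the flat cone, one still has to verify that under the $\mathbb{Z}/\ell\mathbb{Z}$ quotient the coefficients of the higher order terms extend with the regularity recorded in Theorem \ref{thm_3}(2), so that no conical defect survives at the origin. This is exactly the delicate fractional-power bookkeeping flagged in Remark \ref{regularity_remark}, and it is where the interplay between $k$, $n$, and the vanishing or nonvanishing of $\frac{\partial^{(k+1)(n-1)}}{\partial\z^{(k+1)(n-1)}}\big(\tfrac{\z^k}{\mathscr{F}}\big)^{n-1}\big|_{\z=0}$ must be tracked with care; the clean leading-order statement above is comparatively routine, whereas establishing honest nonsingularity is where the substance lies.
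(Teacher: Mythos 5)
Your proposal follows essentially the same route as the paper's proof: invoke the normal form of Theorem \ref{thm_3}, observe that the diagonal $\mathbb{Z}/\ell\mathbb{Z}\subset\U(1)$ acts by isometries of the $\U(n)$-invariant metric so that it descends to the quotient, note that the quotient exactly un-stretches the Hopf fiber of the Berger-type link $g_{\CP^{n-1}}+\ell^2h$ back to round proportions, and then specialize to $k=2n-2$, where $\tfrac{k+1}{2n-1}=1$ and $(k+1)(2n-1)=(2n-1)^2$ is automatically a square. Your explicit rescaling $\tilde\theta=\ell\theta$ of the fiber coordinate is just a spelled-out version of the identification the paper asserts, so up to and including the displayed form of $\overline{g}_{(\mathscr{F},\mathcal{C})}$ you have reproduced the paper's argument.

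Where you diverge is in your closing paragraph, and there your assessment of the difficulty is inverted. The paper performs no further fractional-power or logarithm analysis after the quotient, and none is needed: the quotient is by a finite group of isometries, and every term of the $\U(n)$-invariant expansion \eqref{metric_expansion_2}--\eqref{hot_decay}, including every fractional power of $r$ and every $\log$ term, is invariant under the diagonal action, so the entire expansion descends verbatim and the regularity class over the origin is simply inherited from Theorem \ref{thm_3}; the paper dispatches this in a single sentence. Moreover, you have misread what ``nonsingular'' means in the corollary: it refers only to the leading cone being the standard Euclidean cone $dr^2+r^2g_{S^{2n-1}}$, i.e.\ to the absence of a cone-angle defect, not to smoothness of the extension --- indeed the remark immediately following the corollary states that for $k=2n-2$ the nonsingular metric $\overline{g}_{(\mathscr{F},\mathcal{C})}$ is still, a priori, only of class $C^0$ over the origin, with higher regularity governed by the criteria of Theorem \ref{thm_3}(2) exactly as before. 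So the step you flag as ``where the substance lies'' requires no work beyond what Theorem \ref{thm_3} already provides, and your proof, with that worry removed, is complete and coincides with the paper's.
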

This is proved in Section \ref{pf_cor_thm_3}.  Note that the regularity statements of Theorem \ref{thm_3} hold for these metrics as well and, in particular, when $k=2n-2$ the nonsingular Klsc metric $\overline{g}_{(\mathscr{F},\mathcal{C})}$ is, a priori, of class $C^0$ over the origin.  However, once again, these often will have higher regularity, recall Remark \ref{regularity_remark}.  See Section \ref{fix_2} for an example.

\begin{remark}
{\em
Both Theorem \ref{thm_3} and Corollary \ref{cor_thm_3} can be restated to hold for the pair $(\mathscr{F},\mathcal{C})$ being admissible on the annulus $\mathcal{A}_{0,\beta}\subset\CC^n$, for any $\beta>0$, as opposed to on $\mathcal{A}_{0,\infty}=\CC^n\setminus\{0\}$.  In this case, the resulting Klsc metric, $g_{(\mathscr{F},\mathcal{C})}$, will be defined on the open ball of radius $\sqrt{\beta}$ about the origin and the same statements concerning the singularity and regularity hold.  The statements of Theorem \ref{thm_3} and Corollary \ref{cor_thm_3} are given for all of $\CC^n$ only for simplicity of presentation.}
\end{remark}

\subsection{Questions}
\label{questions}
There are several natural questions motivated by our work here:
\begin{enumerate}
\item The conformal method used to proved Theorem \ref{thm_1} is simplified by restricting to the $\U(n)$-invariant setting as it reduces the question from a quasi-linear PDE to a non-linear ODE.  Does a generalization of Theorem \ref{thm_1} hold when no symmetry is assumed?  If so, what can then be said about generalizations of the results of Section \ref{specifying}?

\item Every $\U(n)$-invariant Hermitian metric on an annulus $\mathcal{A}_{\alpha,\beta}\subset(\CC^n,J_0)$ is conformal to a $\U(n)$-invariant K\"ahler metric there.  However, this is not necessarily true if the $\U(n)$-invariant assumption is dropped.  The fact that we begin with a K\"ahler metric and then search for the desired conformal factor simplifies the proof of Theorem \ref{thm_1} since $S=2S_C$  for such an initial metric, which leads to cancelation in \eqref{eq_1}, and also because the Laplacian for a K\"ahler metric here takes the form \eqref{laplacian}, which reduces \eqref{eq_2} to \eqref{eq_3}.  If a generalization of Theorem \ref{thm_1} exists when no symmetry is assumed, could it be further generalized to the situation where the initial metric is a non-conformally-K\"ahler Hermitian metric?

\item As mentioned above, the Klsc condition gives a clear and geometric meaning to the Chern scalar curvature as it can replace the Riemannian scalar curvature in the expansion of the volume of geodesic balls \eqref{vol_expansion}.  Does the Klsc condition have other geometric implications, or force restrictions of the scalar curvature, on non-compact non-K\"ahler Hermitian manifolds?

\item
\label{singular_yamabe} The singular Yamabe problem is stated as follows:  Given a compact Riemannian manifold $(M,g)$ and a closed subset $K\subset M$, find a complete metric $\tilde{g}$ on $M\setminus K$ which has constant scalar curvature and is conformal to $g$.
While this problem is not solved completely, there has been considerable progress, for instance, see \cite{LoewnerNirenberg,Schoen_singularYamabe,SchoenYau_singularYamabe,AvilesMcOwen,Mazzeo_singularYamabe,MazzeoPacard_sYamabe1,MazzeoPacard_sYamabe2,Finn1,Finn2}.  The Chern-Yamabe problem generalizes naturally to the singular case as well.  To the best of the authors knowledge, this has not been studied.  From our work here, we know that there exist non-K\"ahler Klsc metrics in the non-compact setting.  Thus, we ask whether there are non-K\"ahler Hermitian Klsc metrics which simultaneously solve both the singular Yamabe and singular Chern-Yamabe problems?
\end{enumerate}

\subsection{Acknowledgements}
The authors would like to thank Jeff Viaclovsky for his insight into this problem and many useful comments, as well as Ethan Leeman and Matthew Gursky for many valuable discussions.

\section{Background and conventions}
\label{background}
In this section we provide a brief description of the complex geometry necessary for this work as well as fix our conventions and notation.  In particular, we discuss Hermitian metrics, the Chern connection and the curvature of the Chern connection.  For a more thorough exposition of this material see \cite{Huybrechts}.

Let $(M,J)$ be a complex $n$-dimensional manifold.  The complexified tangent bundle decomposes as
\begin{align}
TM\otimes \CC=T^{1,0}M\oplus T^{0,1}M,
\end{align}
into the $\pm\sqrt{-1}$ eigenspaces of $J$ respectively.  In terms of a local holomorphic coordinate system $\{z_1,\cdots,z_n\}$, where $z_i=x_i+\sqrt{-1}y_i$, locally these eigenspaces are
\begin{align}
\begin{split}
T^{1,0}M=\text{span}\Big\{\frac{\partial}{\partial z_1}, \cdots, \frac{\partial}{\partial z_n}\Big\}\phantom{=}\text{and}\phantom{=}T^{0,1}M=\text{span}\Big\{\frac{\partial}{\partial \bar{z}_1}, \cdots, \frac{\partial}{\partial \bar{z}_n}\Big\},
\end{split} 
\end{align}
where
\begin{align}
\frac{\partial}{\partial z_i}=\frac{1}{2}\Big(\frac{\partial}{\partial x_i}-\sqrt{-1}\frac{\partial}{\partial y_i}\Big)\phantom{=}\text{and}\phantom{=}\frac{\partial}{\partial \bar{z}_i}=\frac{1}{2}\Big(\frac{\partial}{\partial x_i}+\sqrt{-1}\frac{\partial}{\partial y_i}\Big).
\end{align}
The complexified cotangent bundle decomposes into the $\pm\sqrt{-1}$ eigenspaces of $J$ analogously as
\begin{align}
T^*M\otimes \CC=\Lambda^{1,0}M\oplus \Lambda^{0,1}M,
\end{align}
which are locally given by
\begin{align}
\begin{split}
\Lambda^{1,0}M=\text{span}\big\{dz_1, \cdots, dz_n\big\}\phantom{=}\text{and}\phantom{=}
\Lambda^{0,1}M=\text{span}\big\{d\bar{z}_1, \cdots, d\bar{z}_n\big\},
\end{split} 
\end{align}
where
\begin{align}
dz_i=dx_i+\sqrt{-1}dy_i\phantom{=}\text{and}\phantom{=}d\bar{z}_i=dx_i-\sqrt{-1}dy_i.
\end{align}

On a complex manifold $(M,J)$, a Riemannian metric $g$ is Hermitian if 
\begin{align}
g(JX,JY)=g(X,Y)
\end{align}
for all vector fields $X$ and $Y$.  Given a Hermitian metric $g$, the associated $2$-form $\omega\in\Lambda^{1,1}$ is defined by $\omega(\cdot,\cdot)=g(J\cdot,\cdot)$ and can be written as 
\begin{align}
\omega=\sqrt{-1}\sum_{i,j}g_{i\bar{j}}dz_i\wedge d\bar{z}_j,
\end{align}
where $g_{i\bar{j}}=g(\frac{\partial}{\partial z_i},\frac{\partial}{\partial \bar{z}_j})$.  We will use a Hermitian metric and its associated $2$-form interchangeably.  A Hermitian metric $g$ is K\"ahler if the associated $2$-form $\omega$ is closed, and in this case $\omega$ is called the K\"ahler form.

The Chern connection, on the Hermitian holomorphic tangent bundle $(T^{1,0}M,g)$, is the unique connection that is compatible with both the Hermitian metric and complex structure.  This is opposed to the Levi-Civita connection which is the unique symmetric (torsion free) connection that is compatible with the Riemannian metric.  There is a natural isomorphism between $T^{1,0}M$ and $TM$ under which the Chern connection (or, more generally, any Hermitian connection) corresponds to a connection on the underlying Riemannian manifold.  However, the Chern connection may have a nonvanishing torsion tensor, and the induced connection is not necessarily the Levi-Civita connection.  In fact, under this isomorphism, the Chern and Levi-Civita connections coincide if and only if the metric is K\"ahler.  Note that in the non-K\"ahler setting, although these connections differ, one can still look for relationships between their respective induced geometries which is precisely the focus of this paper. 

We conclude this section with a short discussion on the curvature tensor with respect to the Chern connection, and for explicit curvature formulas we refer the reader to \cite{Liu-Yang_1}.  Write the components of the full curvature tensor with respect to the Chern connection as
$\Theta_{i\bar{j}k\bar{\ell}}$.  Taking the trace on the $k$ and $\bar{\ell}$ indices yields the components of the Chern-Ricci curvature
\begin{align}
\Theta_{i\bar{j}}=g^{k\bar{\ell}}\Theta_{i\bar{j}k\bar{\ell}},
\end{align}
which has the associated Chern-Ricci form
\begin{align}
\rho=\sqrt{-1}\Theta_{i\bar{j}}dz_i\wedge d\bar{z}_j=-\sqrt{-1}\partial\bar{\partial}\log\det(g_{i\bar{j}}).
\end{align}
Then, by taking the trace of the Chern-Ricci curvature, we obtain the Chern scalar curvature
\begin{align}
S_C=g^{i\bar{j}}\Theta_{i\bar{j}}=g^{i\bar{j}}g^{k\bar{\ell}}\Theta_{i\bar{j}k\bar{\ell}}.
\end{align} 
Lastly, we note that on a complex $n$-dimensional Hermitian manifold $(M,J,g)$, the Chern scalar-curvature, Chern-Ricci form and the associated $2$-form satisfy the relationship
\begin{align}
\label{S_C_relationship_1}
\frac{S_C}{n}\omega^n=\rho\wedge\omega^{n-1}.
\end{align}
This will be useful in finding how the Chern scalar curvature transforms under a conformal change in the metric in Section~\ref{Chern_scalar_change}.

\section{The conformal method}
\label{thm_1_proof}
In this section we prove Theorem \ref{thm_1}, the idea of which is as follows.  
Starting with a K\"ahler metric, we examine how both the Riemannian and Chern scalar curvatures change under conformal transformation.  The transformation of the Riemannian scalar curvature under a conformal change in the metric is well-known.  Let $(M,g)$ be a $2n$-dimensional Riemannian manifold, where $n\geq 2$, with Riemannian scalar curvature $S$.  Then the conformal metric $\tilde{g}=u^{\frac{2}{n-1}}g$ has Riemannian scalar curvature 
\begin{align}
\label{riemannian_change}
\widetilde{S}=-2\frac{2n-1}{n-1}u^{-\frac{n+1}{n-1}}\Delta u+Su^{-\frac{2}{n-1}}.
\end{align}
While a general formula for the transformation of the Chern scalar curvature under a conformal change is known, our interest is in how it transforms specifically in the $\U(n)$-invariant setting when the initial metric is K\"ahler.  We prove such a formula below in Section \ref{Chern_scalar_change}.  This target simplification is essential to the rest of the proof. 
Then, in Section \ref{finding_u}, we equate the formulas for a conformal change in the respective scalar curvatures, with the appropriate scale factor, to obtain a quasi-linear PDE which reduces to a non-linear second order ODE from the $\U(n)$-invariance assumption.  Finally, we solve this to find an explicit solution for a conformal factor, which is unique up to scale and constant of integration, that results in a non-K\"ahler Hermitian Klsc metric.  

Given the complexity of the equations, it is somewhat surprising that we were able to find an explicit solution. Having such an explicit solution is very useful as it allows for the results of Section \ref{specifying}.  Also, it is important to remark here that the fact that the initial metric is K\"ahler will be of considerable aid the analysis.

\subsection{The conformal transformation of Chern scalar curvature}
\label{Chern_scalar_change}
Let $(\omega,J_0)$ be a $\U(n)$-invariant K\"ahler metric on a spherical shell about the origin in $\CC^n$, where
\begin{align}
\omega=\sqrt{-1}\partial\bar{\partial}\phi(\z)
\end{align}
for some K\"ahler potential $\phi(\z)$.
The volume form is given by
\begin{align}
\frac{\omega^n}{n!}=Vdz_1\wedge d\bar{z}_1\wedge\cdots\wedge dz_n\wedge d\bar{z}_n,
\end{align}
and the Chern-Ricci form by
\begin{align}
\rho=-\sqrt{-1}\partial\bar{\partial}\log(V).
\end{align}

Consider the conformal metric  $(\widetilde{\omega},J_0)$, where $\widetilde{\omega}=u^{\frac{2}{n-1}}\omega$.  Recalling \eqref{S_C_relationship_1}, since this conformal metric is Hermitian with respect to the complex structure $J_0$, its Chern scalar curvature, $\widetilde{S_C}$, and Chern-Ricci form, $\widetilde{\rho}$, satisfy
\begin{align}
\label{S_C_relationship_2}
\Bigg(\frac{\widetilde{S_C}}{n}\Bigg)\widetilde{\omega}^n=\widetilde{\rho}\wedge\widetilde{\omega}^{n-1}.
\end{align}
We will use \eqref{S_C_relationship_2} to extract the transformation of the Chern scalar curvature under a conformal change.

First, observe that since
\begin{align}
\begin{split}
\label{conformal_omega}
\widetilde{\omega}^{n-1}=&u^2\omega^{n-1}\\
\text{and}\phantom{====}&\\
\widetilde{\omega}^n=&u^{\frac{2n}{n-1}}\omega^n,
\end{split}
\end{align}
the Chern-Ricci form transforms as
\begin{align}
\label{conformal_rho}
\widetilde{\rho}=-\sqrt{-1}\partial\bar{\partial}\log\Big(u^{\frac{2n}{n-1}}V\Big)=-\sqrt{-1}\Big(\frac{2n}{n-1}\Big)\partial\bar{\partial}\log(u)+\rho.
\end{align}
Combining \eqref{S_C_relationship_2}, \eqref{conformal_omega} and \eqref{conformal_rho} we find that
\begin{align}
\Bigg(\frac{\widetilde{S_C}}{n}\Bigg)\widetilde{\omega}^n=u^2\Bigg(-\sqrt{-1}\Big(\frac{2n}{n-1}\Big)\partial\bar{\partial}\log(u)\wedge\omega^{n-1}+\frac{S_C}{n}\omega^n\Bigg).
\end{align}

Now, because of the $\U(n)$-invariance, without loss of generality we can restrict our attention to the $z_1$-axis where the K\"ahler form is given by
\begin{align}
\label{omega_z1}
\omega=\sqrt{-1}\partial\bar{\partial}\phi=\sqrt{-1}\Bigg((\z\phi')'dz_1\wedge d\bar{z}_1+\sum_{i=2}^n\phi' dz_i\wedge d\bar{z}_i\Bigg)
\end{align}
and, following from \eqref{conformal_rho}, the Chern-Ricci form of the conformal metric by
\begin{align}
\widetilde{\rho}=-\sqrt{-1}\frac{2n}{n-1}\Bigg(\Big(\z\frac{u'}{u}\Big)'dz_1\wedge d\bar{z}_1+\sum_{i=2}^n\frac{u'}{u} dz_i\wedge d\bar{z}_i\Bigg)+\rho.
\end{align}
On the $z_1$-axis, from \eqref{omega_z1}, observe that 
\begin{align}
\begin{split}
\label{omega_n_n-1}
\omega^n=&\Big((-1)^{\frac{n}{2}} n! (\phi')^{n-1}(\z\phi')'\Big)dz_1\wedge d\bar{z}_1\wedge \cdots \wedge dz_n\wedge d\bar{z}_n\\
\text{and}\phantom{==}&\\
\omega^{n-1}=&(-1)^{\frac{n-1}{2}}(n-1)!\Bigg((\phi')^{n-1}dz_2\wedge d\bar{z}_2\wedge \cdots \wedge dz_n\wedge d\bar{z}_n\\
&\phantom{========}+(\phi')^{n-2}(\z\phi')'\sum _{i=2}^ndz_1\wedge d\bar{z}_1\wedge \cdots \wedge\widehat{dz_i\wedge d\bar{z}_i}\cdots\wedge dz_n\wedge d\bar{z}_n\Bigg),
\end{split}
\end{align}
where $\widehat{dz_i\wedge d\bar{z}_i}$ denotes the omission of that wedge product.  

Therefore, from \eqref{conformal_rho} and \eqref{omega_n_n-1}, we find that
\begin{align}
\begin{split}
\label{chern_change_wedge}
\frac{\widetilde{S_C}}{n}\widetilde{\omega}^n
=(-1)^{\frac{n}{2}}n!u^2(\phi')^{n-2}\Bigg[&-\Big(\frac{2}{n-1}\Big)\Bigg(\Big(\z\frac{u'}{u}\Big)'\phi'+(n-1)\Big(\frac{u'}{u}\Big)(\z\phi')'\Bigg)\\
&+\frac{S_C}{n}\phi'(\z\phi')'\Bigg]dz_1 \wedge d\bar{z}_1\wedge\cdots \wedge dz_n\wedge  d\bar{z}_n.
\end{split}
\end{align}
Finally, from \eqref{conformal_omega}, \eqref{omega_n_n-1} and \eqref{chern_change_wedge}, we find that, under such a conformal change, the Chern scalar curvature transforms as
\begin{align}
\label{chern_change}
\widetilde{S_C}=-\frac{2n}{n-1}\Big(\frac{u^{-\frac{2}{n-1}}}{\phi'(\z\phi')'}\Big)\Bigg(\Big(\z\frac{u'}{u}\Big)'\phi'+(n-1)\Big(\frac{u'}{u}\Big)(\z\phi')'\Bigg)+u^{-\frac{2}{n-1}}S_C.
\end{align}

\subsection{The Klsc conformal factor}
\label{finding_u}
Let $(\omega,J_0)$ be a $\U(n)$-invariant K\"ahler metric on a spherical shell about the origin in $\CC^n$ with K\"ahler potential $\phi(\z)$.  Since $(\omega,J_0)$  is K\"ahler, we have that the Riemannian and Chern scalar curvatures here satisfy
\begin{align}
\label{initial_equivalence}
S=2S_C.
\end{align} 
We would like to find a conformal factor $u^{\frac{2}{n-1}}$ so that
\begin{align}
\widetilde{S}=2\widetilde{S_C},
\end{align}
Therefore, we divide \eqref{riemannian_change} by $2$ and set it equal to \eqref{chern_change} to obtain the equation
\begin{align}
\begin{split}
\label{eq_1}
-\frac{2n-1}{n-1}u^{-\frac{n+1}{n-1}}\Delta u+u^{-\frac{2}{n-1}}\Big(\frac{S}{2}\Big)=&-\frac{(2n)u^{-\frac{2}{n-1}}}{(n-1)\phi'(\z\phi')'}\Bigg(\Big(\z\frac{u'}{u}\Big)'\phi'+(n-1)\Big(\frac{u'}{u}\Big)(\z\phi')'\Bigg)\\
&+u^{-\frac{2}{n-1}}S_C.
\end{split}
\end{align}
It is important to note that, since the initial metric is K\"ahler, from \eqref{initial_equivalence}, the terms $u^{-\frac{2}{n-1}}(S/2)$ and $u^{-\frac{2}{n-1}}S_C$, on the left and right hand sides of \eqref{eq_1} respectively, cancel.  This is the first reduction made possible due to the fact that the initial metric is K\"ahler.  

Equation \eqref{eq_1} then further reduces to the following non-linear second order ODE:
\begin{align}
\label{eq_2}
\Big(\frac{\phi'(\z\phi')'}{u}\Big)\Delta u=\Big(\frac{2n}{2n-1}\Big)\Bigg(\Big(\z\frac{u'}{u}\Big)'\phi'+(n-1)\Big(\frac{u'}{u}\Big)(\z\phi')'\Bigg).
\end{align}
We exploit that the initial metric is K\"ahler once again to simplify \eqref{eq_2}.  For a general K\"ahler metric $(g,J)$, the Riemannian Laplacian is given by $\Delta(\cdot)=2g^{i\bar{j}}\frac{\partial^2}{\partial z_i\partial \bar{z}_j}(\cdot)$.
Therefore, here
\begin{align}
\label{laplacian}
\Delta u=2\Bigg(\frac{(\z u')'}{(\z\phi')'}+(n-1)\frac{u'}{\phi'}\Bigg),
\end{align}
hence \eqref{eq_2} reduces to
\begin{align}
\label{eq_3}
\frac{1}{u}\Big((\z u')'\phi'+(n-1)u'(\z \phi')'\Big)=\frac{n}{2n-1}\Bigg(\Big(\z\frac{u'}{u}\Big)'\phi'+(n-1)\Big(\frac{u'}{u}\Big)(\z\phi')'\Bigg).
\end{align}
Next, multiply \eqref{eq_3} through by $\z$, and change variables by letting
\begin{align}
\begin{split}
y=&\frac{u'}{u},\phantom{=}\text{so }y'+y^2=\frac{u''}{u},\\
\text{and}\phantom{==}&\\
f=&\z\phi'.
\end{split}
\end{align}
This reduces the equation to
\begin{align}
\label{eq_4}
fy+\z f(y'+y^2)+(n-1)\z f'y =\frac{n}{2n-1}\Big(fy+\z fy'+(n-1)  \z f'y\Big).
\end{align}
Then, isolate the $y^2$ term to obtain the equation
\begin{align}
\label{eq_5}
\z f y^2=-\Big(\frac{n-1}{2n-1}\Big)\Big(f+(n-1)\z f'\Big)y-\Big(\frac{n-1}{2n-1}\Big)\z fy'.
\end{align}
From multiplying \eqref{eq_4} through by $-\frac{2n-1}{n-1}\big(\z f y^2\big)^{-1}$, we obtain
\begin{align}
\label{eq_6}
-\frac{2n-1}{n-1}=\Big(\frac{f+(n-1)\z f'}{\z f}\Big)\frac{1}{y}+\frac{y'}{y^2}.
\end{align}
Change variables once again by setting
\begin{align}
w=\frac{1}{y},\phantom{=}\text{so }w'=-\frac{y'}{y^2}.
\end{align}
Then, \eqref{eq_6} becomes
\begin{align}
\label{eq_7}
w'-\Big(\frac{f+(n-1)\z f'}{\z f}\Big)w=\frac{2n-1}{n-1}.
\end{align}
By using the integrating factor
\begin{align}
\exp\Big(-\int \frac{f+(n-1)\z f'}{\z f} d\z\Big)=\frac{1}{\z f^{n-1}},
\end{align}
equation \eqref{eq_7} can be solved to find
\begin{align}
\label{eq_8}
w=\frac{2n-1}{n-1}\big(\z f^{n-1}\big)\int\frac{1}{\z f^{n-1}}d\z.
\end{align}
Here, we suppress the constant of integration.
Now, since $w=\frac{u}{u'}$, by inverting both sides of \eqref{eq_8} and integrating, we find that
\begin{align}
\label{eq_9}
\log(u)=\int\frac{1}{w}d\z=\frac{n-1}{2n-1}\int\frac{1}{\z f^{n-1}\big(\int\frac{1}{\z f^{n-1}}d\z\big)}d\z.
\end{align}
To integrate the right hand side of \eqref{eq_9}, we make the substitution
\begin{align}
h=\int\frac{1}{\z f^{n-1}}d\z,\phantom{=}\text{so}\phantom{=}dh=\frac{1}{\z f^{n-1}}d\z,
\end{align}
and see that
\begin{align}
\label{eq_10}
\log(u)=\frac{n-1}{2n-1}\int\frac{1}{h}dh=\frac{n-1}{2n-1}\log\Big|\int\frac{1}{\z f^{n-1}}d\z\Big|+C,
\end{align}
for some constant $C$.
Finally, by exponentiating, we see that
\begin{align}
u=C\Big|\int\frac{1}{\z f^{n-1}}d\z\Big|^{\frac{n-1}{2n-1}}=C\Big|\int\frac{1}{\z^n(\phi')^{n-1}}d\z\Big|^{\frac{n-1}{2n-1}}.
\end{align}
Therefore, up to scale, the desired Klsc conformal factor is
\begin{align}
v^2=u^\frac{2}{n-1}=\Big(\int\frac{1}{\z^n(\phi')^{n-1}}d\z\Big)^{\frac{2}{2n-1}}.
\end{align}

\begin{remark}
{\em
The fact that the conformal metric $(v^2\omega,J_0)$ is no longer K\"ahler follows immediately from examining its associated $2$-form restricted to the $z_1$-axis because if it were K\"ahler there would exist some function $\psi(\z)$ so that $(\z\psi')'=v^2(\z\phi')'$ and $\psi'=v^2\phi'$, but such a $\psi(\z)$ cannot exist since $v$ is non-constant.  Therefore, $(v^2\omega,J_0)$ is a non-K\"ahler Hermitian Klsc metric.
}
\end{remark}

\section{Moduli space and behavior specification}
\label{pf_thm_2}
In this section we prove Theorem \ref{thm_2}.
We would like to show that there is a $\U(n)$-invariant non-K\"ahler Hermitian Klsc metric where the coefficient of the $g_{\CP^{n-1}}$ component is given by $\z\mathscr{F}$ which, given a further specified constant, is unique.   The proof will proceed in three steps.  First, we will show that, from such a given function $\mathscr{F}(\z)$, we can recover a $\phi(\z)$ so that $\mathscr{F}=\big(\int\frac{1}{\z^n(\phi')^{n-1}}d\z\big)^{\frac{2}{2n-1}}\phi'$.  We would like this $\phi(\z)$ to be a K\"ahler potential, so next, we show that if the pair $(\mathscr{F},\mathcal{C})$ is admissible on that annulus, then the recovered function $\phi(\z)$ is in fact a K\"ahler potential there.  Finally, this is used to obtain the other component of the metric.

\begin{proposition}
\label{phi'_prop}
Let $\mathscr{F}(\z)$ be a smooth positive function on the annulus $\mathcal{A}_{\alpha,\beta}\subset{\CC^n}$.  Then, the function $\phi'(\z)$ defined by
\begin{align}
\label{phi_from_G}
\phi'(\z)=\mathscr{F}\cdot\Big(\frac{1}{2n-1}\int\frac{1}{\z^n\mathscr{F}^{n-1}}d\z+\mathcal{C}\Big)^{-2}
\end{align}
is such that
\begin{align}
\label{G_from_phi}
\mathscr{F}(\z)=\phi'\cdot\Big(\int\frac{1}{\z^n(\phi')^{n-1}}d\z\Big)^{\frac{2}{2n-1}}.
\end{align}
Furthermore, $\phi'(\z)$ is the unique such function up to the constant $\mathcal{C}$.
\end{proposition}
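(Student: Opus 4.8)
The plan is to regard \eqref{phi_from_G} and \eqref{G_from_phi} as inverse to one another, and to prove both the forward identity and the uniqueness by one and the same separation-of-variables computation, run in opposite directions. Throughout I would abbreviate the parenthetical factor in \eqref{phi_from_G} by
\[
P=\frac{1}{2n-1}\int\frac{1}{\z^n\mathscr{F}^{n-1}}d\z+\mathcal{C},
\]
so that $\phi'=\mathscr{F}P^{-2}$ and, differentiating, $P'=\frac{1}{2n-1}\z^{-n}\mathscr{F}^{-(n-1)}$, i.e. $\z^{-n}\mathscr{F}^{-(n-1)}=(2n-1)P'$.

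First I would verify that the $\phi'$ of \eqref{phi_from_G} satisfies \eqref{G_from_phi}. The key observation is that, expressed through $P$, the integrand appearing in \eqref{G_from_phi} is a perfect derivative. Indeed, since $(\phi')^{n-1}=\mathscr{F}^{n-1}P^{-2(n-1)}$, one computes
\[
\frac{1}{\z^n(\phi')^{n-1}}=\frac{P^{2(n-1)}}{\z^n\mathscr{F}^{n-1}}=(2n-1)P^{2n-2}P'=\frac{d}{d\z}\big(P^{2n-1}\big),
\]
so that $\int\frac{1}{\z^n(\phi')^{n-1}}d\z=P^{2n-1}$, with the constant of integration chosen to vanish. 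Raising this to the power $\tfrac{2}{2n-1}$ collapses it to $P^2$, and then $\phi'\cdot P^2=\mathscr{F}P^{-2}\cdot P^2=\mathscr{F}$, which is exactly \eqref{G_from_phi}.

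For the uniqueness I would reverse this computation. Suppose $\phi'$ is any positive function satisfying \eqref{G_from_phi}, and set $Q=\int\frac{1}{\z^n(\phi')^{n-1}}d\z$, so that $\mathscr{F}=\phi'Q^{\frac{2}{2n-1}}$ and hence $\phi'=\mathscr{F}Q^{-\frac{2}{2n-1}}$. Substituting this into $Q'=\z^{-n}(\phi')^{-(n-1)}$ yields the separable relation $Q^{-\frac{2(n-1)}{2n-1}}Q'=\z^{-n}\mathscr{F}^{-(n-1)}$, whose left-hand side is precisely $(2n-1)\frac{d}{d\z}\big(Q^{\frac{1}{2n-1}}\big)$. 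Integrating once identifies $Q^{\frac{1}{2n-1}}$ with $P$ up to a single additive constant, which is $\mathcal{C}$; hence $Q=P^{2n-1}$ and $\phi'=\mathscr{F}P^{-2}$ recovers \eqref{phi_from_G}. This shows the only freedom available is the choice of $\mathcal{C}$, giving the asserted uniqueness.

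The computation itself is short, so the hard part is not analytic but bookkeeping: one must keep careful track of the two suppressed constants of integration, namely the one inside $P$ (which is exactly $\mathcal{C}$) and the one in $\int\frac{1}{\z^n(\phi')^{n-1}}d\z$ (which has to be normalized to zero for \eqref{G_from_phi} to hold on the nose, matching the freedom already noted in Theorem \ref{thm_1}). I would also record that the positivity of $\mathscr{F}$ forces $\phi'=\mathscr{F}P^{-2}>0$, and that, since $2n-1$ is odd, the odd-root manipulations and the identity $\big(P^{2n-1}\big)^{2/(2n-1)}=P^2$ are unambiguous irrespective of the sign of $P$, so no sign ambiguities obstruct either direction.
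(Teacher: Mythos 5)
Your proposal is correct and follows essentially the same route as the paper: your uniqueness half is precisely the paper's computation (your $Q$ is the paper's substitution $h=\int\frac{1}{\z^n(\phi')^{n-1}}d\z$, integrated via $h^{-\frac{2(n-1)}{2n-1}}dh=(2n-1)\,d\big(h^{\frac{1}{2n-1}}\big)$ and then solved for $\phi'$), while your forward verification is the identical separation-of-variables manipulation run in reverse. The paper presents only the direction from \eqref{G_from_phi} to \eqref{phi_from_G} and relies on reversibility for existence, so your explicit two-way bookkeeping of the suppressed constants is a slightly more careful write-up of the same argument.
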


\begin{proof}
Begin by observing that
\begin{align}
\z^n\mathscr{F}^{n-1}=\Big(\int\frac{1}{\z^n(\phi')^{n-1}}d\z\Big)^{\frac{2(n-1)}{2n-1}}\z^n(\phi')^{n-1},
\end{align}
since $\mathscr{F}=\big(\int\frac{1}{\z^n(\phi')^{n-1}}d\z\big)^{\frac{2}{2n-1}}\phi'$, and thus that
\begin{align}
\label{int_1/zF_1}
\int\frac{1}{\z^n\mathscr{F}^{n-1}}d\z=\int\frac{1}{\z^n(\phi')^{n-1}\big(\int\frac{1}{\z^n(\phi')^{n-1}}d\z\big)^{\frac{2(n-1)}{2n-1}}}d\z.
\end{align}
To integrate the right hand side of \eqref{int_1/zF_1}, we make the substitution
\begin{align}
h=\int\frac{1}{\z^n(\phi')^{n-1}}d\z,\phantom{=}\text{so}\phantom{=}dh=\frac{1}{\z^n(\phi')^{n-1}}d\z,
\end{align}
and find that
\begin{align}
\label{int_1/zF_2}
\int\frac{1}{\z^n\mathscr{F}^{n-1}}d\z+\mathcal{\widetilde{C}}=(2n-1)h^{\frac{1}{2n-1}}=(2n-1)\Big(\int\frac{1}{\z^n(\phi')^{n-1}}d\z\Big)^{\frac{1}{2n-1}},
\end{align}
where $\mathcal{\widetilde{C}}=(2n-1)\mathcal{C}$ is the constant of integration.  Raising the terms of \eqref{int_1/zF_2} to the $(2n-1)^{th}$-power yeilds
\begin{align}
\Big(\frac{1}{2n-1}\int\frac{1}{\z^n\mathscr{F}^{n-1}}d\z+\mathcal{C}\Big)^{2n-1}=\int\frac{1}{\z^n(\phi')^{n-1}}d\z.
\end{align}
Taking a derivative, we see that
\begin{align}
\frac{1}{\z^n\mathscr{F}^{n-1}}\Big(\frac{1}{2n-1}\int\frac{1}{\z^n\mathscr{F}^{n-1}}d\z+\mathcal{C}\Big)^{2n-2}=\frac{1}{\z^n(\phi')^{n-1}}.
\end{align}
Finally, we are able to solve for $\phi'(\z)$ in terms of $\mathscr{F}(\z)$ as
\begin{align}
\phi'(\z)=\mathscr{F}\cdot\Big(\frac{1}{2n-1}\int\frac{1}{\z^n\mathscr{F}^{n-1}}d\z+\mathcal{C}\Big)^{-2}.
\end{align}
\end{proof}

The function $\phi'(\z)$ recovered in Proposition \ref{phi'_prop} is possibly the derivative of a K\"ahler potential $\phi(\z)$.  Since we are in the $\U(n)$-invariant setting, in order for $\phi(\z)$ to be a K\"ahler potential on the annulus $\mathcal{A}_{\alpha,\beta}$, it is necessary for both $\phi'>0$ and $(\z\phi')'>0$ there.  In the following proposition, we see exactly when this is satisfied.

\begin{proposition}
\label{admissible_proposition}
Let $\mathscr{F}(\z)$ be a smooth positive function on the annulus $\mathcal{A}_{\alpha,\beta}\subset\CC^n$, and let $\mathcal{C}$ be a fixed constant.  If the function 
\begin{align}
\label{(G,C)_increasing_2}
\z \mathscr{F}\cdot\Big(\frac{1}{2n-1}\int\frac{1}{\z^n \mathscr{F}^{n-1}}d\z+\mathcal{C}\Big)^{-2}
\end{align}
is strictly increasing on $\mathcal{A}_{\alpha,\beta}$, then the function $\phi(\z)$ defined by
\begin{align}
\label{phi_from_G_2}
\phi(\z)=\int\frac{\mathscr{F}}{\big(\frac{1}{2n-1}\int\frac{1}{\z^n\mathscr{F}^{n-1}}d\z+\mathcal{C}\big)^{-2}}d\z
\end{align}
is a K\"ahler potential there.
\end{proposition}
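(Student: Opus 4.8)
The plan is to verify directly the two positivity conditions that, as recalled immediately before the statement, characterize when a $\U(n)$-invariant function $\phi(\z)$ is a K\"ahler potential on the annulus: namely $\phi'>0$ and $(\z\phi')'>0$. Once both are in hand, the associated $(1,1)$-form $\omega=\sqrt{-1}\partial\bar{\partial}\phi$ is automatically closed, and its positive-definiteness on the $z_1$-axis---which by $\U(n)$-invariance is positive-definiteness everywhere---follows at once from the diagonal expression \eqref{omega_z1}, whose coefficients are exactly $(\z\phi')'$ and $\phi'$. Hence establishing the two inequalities is not merely necessary but sufficient, and $\phi$ is a genuine K\"ahler potential.

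First I would dispose of $\phi'>0$. The function $\phi(\z)$ defined in the statement is, by construction, an antiderivative of the $\phi'(\z)$ produced in Proposition \ref{phi'_prop}, so that
\[
\phi'(\z)=\mathscr{F}\cdot\Big(\frac{1}{2n-1}\int\frac{1}{\z^n\mathscr{F}^{n-1}}d\z+\mathcal{C}\Big)^{-2}
\]
as in \eqref{phi_from_G}. Since $\mathscr{F}$ is positive by hypothesis and the remaining factor is the square of a real quantity (nonvanishing wherever $\phi'$ is defined), we get $\phi'>0$ with no further argument. This condition thus imposes no constraint whatsoever and holds for every pair $(\mathscr{F},\mathcal{C})$.

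The content of the proposition lies entirely in the second inequality, and this is precisely where the admissibility hypothesis is designed to enter. Multiplying the displayed formula for $\phi'$ by $\z$ yields
\[
\z\phi'=\z\mathscr{F}\cdot\Big(\frac{1}{2n-1}\int\frac{1}{\z^n\mathscr{F}^{n-1}}d\z+\mathcal{C}\Big)^{-2},
\]
which is literally the function \eqref{(G,C)_increasing_2} that is assumed to be strictly increasing on $\mathcal{A}_{\alpha,\beta}$. Strict monotonicity of this expression is exactly the statement $(\z\phi')'>0$ there. Combining this with $\phi'>0$ from the previous step shows that $\phi(\z)$ is a K\"ahler potential on the annulus, as claimed.

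I do not expect a genuine obstacle: the argument reduces to the observation that the admissibility condition was engineered to encode $(\z\phi')'>0$, while $\phi'>0$ is forced automatically by the explicit square appearing in \eqref{phi_from_G}. The only point that merits a line of care is the sufficiency direction of the K\"ahler-potential criterion---confirming that positivity of these two scalar coefficients really does make \eqref{omega_z1} a positive-definite (and, being $\sqrt{-1}\partial\bar{\partial}\phi$, closed) form, rather than only the necessity noted in the text---but this is immediate from the diagonal structure of $\omega$ on the $z_1$-axis together with the $\U(n)$-invariance.
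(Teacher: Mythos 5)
Your proposal is correct and follows essentially the same route as the paper: both reduce the claim to the criterion that $\phi'>0$ and $(\z\phi')'>0$, obtain $\phi'>0$ from the positivity of $\mathscr{F}$ and the squared factor in \eqref{phi_from_G}, and identify $\z\phi'$ with the function \eqref{(G,C)_increasing_2} so that the monotonicity hypothesis yields $(\z\phi')'>0$. Your added remark verifying the sufficiency of the two-inequality criterion via the diagonal form \eqref{omega_z1} is a detail the paper leaves implicit, but it is not a different argument.
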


\begin{proof}
A function $\phi(\z)$ is a K\"ahler potential on the annulus $\mathcal{A}_{\alpha,\beta}$ if and only if both
$\phi'>0$ and $(\z\phi')'>0$ there.  First, note that since $\mathscr{F}>0$, if \eqref{(G,C)_increasing_2} is strictly increasing, then $\phi'>0$ on the annulus.  Then, observe, from \eqref{phi_from_G} and \eqref{phi_from_G_2}, that the inequality $(\z\phi')'>0$ is equivalent to the inequality
\begin{align}
\label{admissible_inequality}
\frac{\partial}{\partial\z}\Bigg(\z \mathscr{F}\cdot\Big(\frac{1}{2n-1}\int\frac{1}{\z^n\mathscr{F}^{n-1}}d\z+\mathcal{C}\Big)^{-2}\Bigg)>0
\end{align}
from which the strictly increasing condition of \eqref{(G,C)_increasing_2} follows.
\end{proof}

\begin{remark}
\label{admissible_remark}
{\em We call a pair $(\mathscr{F},\mathcal{C})$ {\em admissible} if the recovered function $\phi(\z)$, given by \eqref{phi_from_G_2}, is a K\"ahler potential as then $\mathscr{F}(\z)$ arises from a K\"ahler potential as a part of the conformal Klsc metric.
Also, although the admissibility condition may seem complicated, it in fact can be viewed as in some sense as a weakening of the condition that $(\z \mathscr{F})'>0$.  This is seen as follows.  If the constant of $\mathcal{C}=0$, expanding \eqref{admissible_inequality} and simplifying yields the inequality
\begin{align}
\label{admissible_inequality_2}
(\z\mathscr{F})'-\frac{2}{\z^{n-1}\mathscr{F}^{n-2}}\Big(\int\frac{1}{\z^n\mathscr{F}^{n-1}}d\z\Big)^{-1}>0.
\end{align}
Then, observe that if $(\z\mathscr{F})'>0$, the anti-derivative $\int\frac{1}{\z^n\mathscr{F}^{n-1}}<0$ for nearly all such $\mathscr{F}$ and therefore \eqref{admissible_inequality_2} is satisfied.  It is important to note from this, that pairs $(\mathscr{F},\mathcal{C})$ satisfying the admissibility condition are plentiful.}
\end{remark}

We have thus far shown that, given an admissible pair $(\mathscr{F},\mathcal{C})$ on an annulus $\mathcal{A}_{\alpha,\beta}$, we can recover a K\"ahler potential $\phi(\z)$.  Let $(\omega, J_0)$ denote the K\"ahler form of the K\"ahler metric arising from this potential, and consider the corresponding conformal non-K\"ahler Klsc metric as given by Theorem \ref{thm_1}.  Because of the $\U(n)$-invariance, we can restrict our attention to examining the associated $2$-form of this metric, $(\widetilde{\omega},J_0)$, on the $z_1$-axis which from \eqref{omega_z1} and Theorem \ref{thm_1}, we see is given by
\begin{align}
\label{hermitian_form_1}
\widetilde{\omega}=\sqrt{-1}\Big(\int\frac{1}{\z^n(\phi')^{n-1}}d\z\Big)^{\frac{2}{2n-1}}\Bigg((\z\phi')'dz_1\wedge d\bar{z}_2+\phi'\sum_{j=2}^ndz_j\wedge d\bar{z}_j\Bigg).
\end{align}
From Proposition \ref{phi'_prop}, we see both that $\mathscr{F}=\phi'\cdot\Big(\int\frac{1}{\z^n(\phi')^{n-1}}d\z\Big)^{\frac{2}{2n-1}}$ and that
\begin{align}
\label{F/phi'}
\Big(\int\frac{1}{\z^n(\phi')^{n-1}}d\z\Big)^{\frac{2}{2n-1}}=\frac{\mathscr{F}}{\phi'}=\Big(\frac{1}{2n-1}\int\frac{1}{\z^n\mathscr{F}^{n-1}}d\z+\mathcal{C}\Big)^{2},
\end{align}
from which we find that
\begin{align}
\begin{split}
(\z\phi')'\cdot\Big(&\int\frac{1}{\z^n(\phi')^{n-1}}d\z\Big)^{\frac{2}{2n-1}}\\
&=\Big(\frac{1}{2n-1}\int\frac{1}{\z^n\mathscr{F}^{n-1}}d\z+\mathcal{C}\Big)^{2}\cdot\frac{\partial}{\partial\z}\Bigg(\z \mathscr{F}\cdot\Big(\frac{1}{2n-1}\int\frac{1}{\z^n\mathscr{F}^{n-1}}d\z+\mathcal{C}\Big)^{-2}\Bigg)\\
&=(\z \mathscr{F})'-\frac{2}{\z^{n-1}\mathscr{F}^{n-2}}\Big(\int\frac{1}{\z^n\mathscr{F}^{n-1}}d\z+\mathcal{C}\Big)^{-1}.
\end{split}
\end{align}
Therefore, the associated $2$-form of the conformal Klsc metric \eqref{hermitian_form_1} is given by
\begin{align}
\begin{split}
\label{hermitian_form_2}
\widetilde{\omega}=\sqrt{-1}\Bigg(\Bigg[(\z \mathscr{F})'-\frac{2}{\z^{n-1}\mathscr{F}^{n-2}}\Big(\int\frac{1}{\z^n\mathscr{F}^{n-1}}d\z+\mathcal{C}\Big)^{-1} \Bigg]&dz_1\wedge d\bar{z}_1\\
+\mathscr{F}\sum_{j=2}^n&dz_j\wedge d\bar{z}_j\Bigg).
\end{split}
\end{align}
Recall that the standard metric on $S^{2n-1}$ is decomposed into $g_{\CP^{n-1}}$, the Fubini-Study metric on $\CP^{n-1}$, and $h$, the metric along the Hopf fiber.   Therefore, since $\omega(\cdot,\cdot)=g(J_0\cdot, \cdot)$ and we are in the $\U(n)$-invariant setting, from \eqref{hermitian_form_2} we see that the non-K\"ahler Hermitian Klsc metric is explicitly given by
\begin{align}
\label{gFC_metric}
g_{(\mathscr{F},\mathcal{C})}=\Bigg((\z\mathscr{F})'-\frac{2}{\z^{n-1}\mathscr{F}^{n-2}}\Big(\int\frac{1}{\z^n\mathscr{F}^{n-1}}d\z+\mathcal{C}\Big)^{-1}\Bigg)\Big((d\sqrt{\z})^2+\z h\Big)+(\z \mathscr{F})g_{\CP^{n-1}}.
\end{align}

Lastly, we obtain the uniqueness and classification result of Theorem \ref{thm_2} as follows.  First, note that due to the admissibility condition, the coefficient of the $\big((d\sqrt{\z})^2+\z h\big)$ component of the metric $g_{(\mathscr{F},\mathcal{C})}$ from \eqref{gFC_metric} is smooth on the annulus.  Next, we will use the fact that any $\U(n)$-invariant Hermitian metric on an annulus $\mathcal{A}_{\alpha,\beta}\subset (\CC^n,J_0)$ is conformal to a K\"ahler metric there, and furthermore that, up to scale, there exists a unique $\U(n)$-invariant K\"ahler metric in each conformal class.  This is seen as follows.  Let $\Omega$ be the associated $2$-form to a $\U(n)$-invariant non-K\"ahler Hermitian metric on $\mathcal{A}_{\alpha,\beta}\subset(\CC^n,J_0)$.  Then, there exists some $\mathscr{E}(\z),\mathscr{F}(\z)\in C^{\infty}(\mathcal{A}_{\alpha,\beta})$ so that the restriction of $\Omega$ to the $z_1$-axis is given by
\begin{align}
\Omega=\sqrt{-1}\Big(\mathscr{E}(\z)dz_1\wedge d\bar{z}_1+\mathscr{F}(\z)\sum_{i=2}^ndz_i\wedge d\bar{z}_i\Big).
\end{align}
We seek some function $v(\z)\in C^{\infty}(\mathcal{A}_{\alpha,\beta})$ so that the conformal metric $e^v\Omega$ is K\"ahler, which would mean that there exists some potential function $\phi(\z)$ so that
\begin{align}
e^v\mathscr{E}=(\z\phi')'\phantom{=}\text{and}\phantom{=}e^v\mathscr{F}=\phi',
\end{align}
where $\phi'$ is the derivative of the K\"ahler potential.
Substituting the second equation into the first, we find that
\begin{align}
\label{second_conformal}
v=\int\frac{\mathscr{E}-(\z \mathscr{F})'}{\z \mathscr{F}} d\z\phantom{=}\text{and}\phantom{=}\phi'=\exp\Big(\int\frac{\mathscr{E}-\mathscr{F}}{\z \mathscr{F}}d\z\Big),
\end{align}
and hence that there is a unique, up to scale, $\U(n)$-invariant K\"ahler metric in each conformal class.
Finally, suppose this non-K\"ahler Hermitian metric $(\mathcal{A}_{\alpha,\beta},J_0,\Omega)$ is a Klsc metric.  We will show that it is necessarily of the form $g_{(\mathscr{F},\mathcal{C})}$ in \eqref{gFC_metric}.
By equating the formulas \eqref{phi_from_G} and \eqref{second_conformal} for $\phi'(\z)$ and taking a derivative, we find that
\begin{align}
\begin{split}
\label{equate_derivative}
\Big(\frac{1}{2n-1}\int\frac{1}{\z^n\mathscr{F}^{n-1}}d\z+\mathcal{C}_1\Big)^{-2}\Bigg(\mathscr{F}'&-\frac{2}{\z^n\mathscr{F}^{n-2}}\Big(\int\frac{1}{\z^n\mathscr{F}^{n-1}}d\z+\mathcal{C}_1\Big)^{-1}\Bigg)\\
=&\Big(\frac{\mathscr{E}-\mathscr{F}}{\z \mathscr{F}}\Big)\exp\Big(\int\frac{\mathscr{E}-\mathscr{F}}{\z \mathscr{F}}d\z+\mathcal{C}_2\Big).
\end{split}
\end{align}
Next, by substituting the formula \eqref{phi_from_G} for the term $\exp\big(\int\frac{\mathscr{E}-\mathscr{F}}{\z \mathscr{F}}d\z+\mathcal{C}_2\big)$ on the right hand side of \eqref{equate_derivative}, since they are equivalent expressions of $\phi'(\z)$, and making appropriate cancellations, we obtain the equation
\begin{align}
\mathscr{F}'-\frac{2}{\z^n\mathscr{F}^{n-2}}\Big(\frac{1}{2n-1}\int\frac{1}{\z^n\mathscr{F}^{n-1}}d\z+\mathcal{C}_1\Big)^{-1}=\frac{\mathscr{E}-\mathscr{F}}{\z}.
\end{align}
This can be solved for $\mathscr{E}$ to find that
\begin{align}
\mathscr{E}=(\z\mathscr{F})'-\frac{2}{\z^{n-1}\mathscr{F}^{n-2}}\Big(\frac{1}{2n-1}\int\frac{1}{\z^n\mathscr{F}^{n-1}}d\z+\mathcal{C}_1\Big)^{-1},
\end{align} 
which is precisely the same formula as the coefficient of the $\big((d\sqrt{\z})^2+\z h\big)$ component of $g_{(\mathscr{F},\mathcal{C})}$ in \eqref{gFC_metric}.  Therefore, any non-K\"ahler Hermitian Klsc metric on $(\mathcal{A}_{\alpha,\beta},J_0)$ must take the form $g_{(\mathscr{F},\mathcal{C})}$ for some admissible pair.

\section{Asymptotically canonical singularities}
\label{pf_thm_3}
In this section we prove Theorem \ref{thm_3} and Corollary \ref{cor_thm_3}, the former of which is constituted by Section \ref{expansion_proof} and Section \ref{regularity_proof}, and the latter by Section \ref{pf_cor_thm_3}.

Let $(\mathscr{F},\mathcal{C})$ be an admissible pair on the annulus $\mathcal{A}_{\alpha,\beta}\subset\CC^n$, where 
the Taylor expansion of $\mathscr{F}(\z)\in C^{\infty}(\CC^n)$ about the origin is given by
\begin{align}
\label{G_expansion}
\mathscr{F}(\z)=\sum_{j=k}^{\infty}\frac{\mathscr{F}^{(j)}(0)}{j!}\z^j.
\end{align}
Note that the first nonzero term is of order $k$.  From Theorem \ref{thm_2}, we have that 
\begin{align}
g_{(\mathscr{F},\mathcal{C})}=\Bigg((\z \mathscr{F})'-\frac{2}{\z^{n-1}\mathscr{F}^{n-2}}\Big(\int\frac{1}{\z^n\mathscr{F}^{n-1}}d\z+\mathcal{C}\Big)^{-1}\Bigg)\Big((d\sqrt{\z})^2+\z h\Big)+(\z \mathscr{F})g_{\CP^{n-1}}
\end{align}
is the corresponding unique Klsc metric on $\mathcal{A}_{0,\infty}$.  In order to understand the behavior of $g$ near the origin, it is necessary to analyze the asymptotic behavior of the terms which comprise the coefficients of $\big((d\sqrt{\z})^2+\z h\big)$ and $g_{\CP^{n-1}}$ as they approach $\z=0$.  In Section \ref{expansion_proof}, we prove the first statement of Theorem \ref{thm_3}, that the Klsc metric extends to all of $\CC^n$ and has a canonical asymptotically cone-like singularity at the origin.  Then, in Section \ref{regularity_proof}, we prove the regularity conditions at the origin.

Throughout the proof a variety of expansions will be considered and manipulated, and it will only be necessary to keep track of the coefficients of the lowest order terms.  Therefore, for brevity we denote the coefficients of the higher order terms of the expansions with capital Latin letters indexed with respect to the appropriate term in the expansion.

\subsection{The expansion of the Klsc metric}
\label{expansion_proof}
We begin by examining the coefficient of $\big((d\sqrt{\z})^2+\z h\big)$.  To study the integral term, first observe from \eqref{G_expansion}, the expansion of $\mathscr{F}(\z)$, that
\begin{align}
\label{G^n-1}
\mathscr{F}^{n-1}=\Big(\frac{\mathscr{F}^{(k)}(0)}{k!}\Big)^{n-1}\z^{k(n-1)}+\sum_{j=1}^{\infty}A_j\z^{k(n-1)+j},
\end{align}
where the constants $A_j$ are the appropriate combination of the coefficients of the Taylor expansion of $\mathscr{F}(\z)$.  Then, around $\z=0$, we have the expansion
\begin{align}
\frac{\z^{k(n-1)}}{\mathscr{F}^{n-1}}=\frac{\z^{k(n-1)}}{\big(\frac{\mathscr{F}^{(k)}(0)}{k!}\big)^{n-1}\z^{k(n-1)}+\sum_{j=1}^{\infty}A_j\z^{k(n-1)+j}}=\Big(\frac{\mathscr{F}^{(k)}(0)}{k!}\Big)^{1-n}\Big(1+\sum_{j=1}^{\infty}B_j\z^j\Big),
\end{align}
where $B_j=\big(\frac{\mathscr{F}^{(k)}(0)}{k!}\big)^{n-1}\big(\frac{1}{j!}\big)\frac{\partial^j}{\partial\z^j}\big(\frac{\z^{k(n-1)}}{\mathscr{F}^{n-1}}\big)\big|_{\z=0}$, and thus
\begin{align}
\frac{1}{\z^n\mathscr{F}^{n-1}}=\Big(\frac{1}{\z^{k(n-1)+n}}\Big)\Big(\frac{\z^{k(n-1)}}{\mathscr{F}^{n-1}}\Big)=\Big(\frac{\mathscr{F}^{(k)}(0)}{k!}\Big)^{1-n}\Big(\frac{1}{\z^{k(n-1)+n}}\Big)\Big(1+\sum_{j=1}^{\infty}B_j\z^j\Big).
\end{align}
Therefore, in a small neighborhood of $\z=0$, we find that
\begin{align}
\begin{split}
\label{int_1/zG}
\int&\frac{1}{\z^n\mathscr{F}^{n-1}}d\z=\Big(\frac{\mathscr{F}^{(k)}(0)}{k!}\Big)^{1-n}\int \frac{1+\sum_{j=1}^{\infty}B_j\z^j}{\z^{k(n-1)+n}}d\z\\
&\phantom{=}=-\Big(\frac{\mathscr{F}^{(k)}(0)}{k!}\Big)^{1-n}\Bigg(\frac{1}{(n-1)(k+1)\z^{(n-1)(k-1)}}+\sum_{j=1}^{(n-1)(k+1)-1}C_j\z^{-(n-1)(k+1)+j}\\
&\phantom{============}-B_{(n-1)(k+1)}\log(\z)-\sum_{j=(n-1)(k+1)+1}^{\infty}C_j\z^{-(n-1)(k+1)+j}\Bigg),
\end{split}
\end{align}
where $C_j=\frac{B_j}{(n-1)(k+1)-j}$ for $j\neq (n-1)(k+1)$.  Notice that the coefficient of the $\log(\z)$ term is still just $B_{(n-1)(k+1)}$  From this, we see that
\begin{align}
\begin{split}
\label{int_1/zG^-1} 
\Big(\int\frac{1}{\z^n\mathscr{F}^{n-1}}d\z+\mathcal{C}\Big)^{-1}=&-\frac{(n-1)(k+1)\big(\frac{\mathscr{F}^{(k)}(0)}{k!}\big)^{n-1}\z^{(n-1)(k+1)}}{1+\sum_{j=1}^{\infty}D_j\z^j-E\z^{(n-1)(k+1)}\log(\z)},
\end{split}
\end{align}
where the constants
\begin{align}
\begin{split}
\label{D,E_coeffiecients}
D_j&=\begin{cases}
(n-1)(k+1)C_j\phantom{=}&j< (n-1)(k+1)\\
\mathcal{C}(n-1)(k+1)\big(\frac{\mathscr{F}^{(k)}(0)}{k!}\big)^{n-1}\phantom{=}&j=(n-1)(k+1)\\
-(n-1)(k+1)C_j\phantom{=}&j> (n-1)(k+1)
\end{cases}\\
\text{and}\phantom{=}&\\
E&=(n-1)(k+1)B_{(n-1)(k+1)}.
\end{split}
\end{align}

Next, similarly to \eqref{G^n-1}, we find that
\begin{align}
\label{z^n-1G^n-2}
\z^{n-1}\mathscr{F}^{n-2}=\z^{k(n-2)+n-1}\Big(\frac{\mathscr{F}^{(k)}(0)}{k!}\Big)^{n-2}\Big(1+\sum_{j=1}^{\infty}F_j\z^j\Big),
\end{align}
for some appropriately defined constants $F_j$.

Therefore, from \eqref{int_1/zG^-1} and \eqref{z^n-1G^n-2}, we see that in some neighborhood of $\z=0$
\begin{align}
\begin{split}
\label{coefficient_1_expansion_0}
-\frac{2}{\z^{n-1}\mathscr{F}^{n-2}}&\Big(\int\frac{1}{\z^n\mathscr{F}^{n-1}}d\z+\mathcal{C}\Big)^{-1}\\
=&\Bigg(\frac{2\big(\frac{\mathscr{F}^{(k)}(0)}{k!}\big)^{2-n}}{\z^{k(n-2)+n-1}\big(1+\sum_{j=1}^{\infty}F_j\z^j\big)}\Bigg)\Bigg(\frac{(n-1)(k+1)\big(\frac{\mathscr{F}^{(k)}(0)}{k!}\big)^{n-1}\z^{(n-1)(k+1)}}{1+\sum_{j=1}^{\infty}D_j\z^j+E\z^{(n-1)(k+1)}\log(\z)}\Bigg)\\
=&\frac{2(n-1)(k+1)\big(\frac{\mathscr{F}^{(k)}(0)}{k!}\big)\z^k}{1+\sum_{j=1}^{\infty}G_j\z^j+\z^{(n-1)(k+1)}\log(\z)\sum_{j=0}^{\infty}H_j\z^j},
\end{split}
\end{align}
for some appropriately defined constants $G_j$ and $H_j$.  Then, letting 
\begin{align}
\label{G(z)}
\mathcal{G}(\z)=\sum_{j=1}^{\infty}G_j\z^j+\z^{(n-1)(k+1)}\log(\z)\sum_{j=0}^{\infty}H_j\z^j,
\end{align}
observe that, around $\z=0$, we have the following expansion
\begin{equation}
\label{1/zF_int}
-\frac{2}{\z^{n-1}\mathscr{F}^{n-2}}\Big(\int\frac{1}{\z^n\mathscr{F}^{n-1}}d\z+\mathcal{C}\Big)^{-1}=2(n-1)(k+1)\Big(\frac{\mathscr{F}^{(k)}(0)}{k!}\Big)\z^k\Big(1+\sum_{j=1}^{\infty}(-1)^{j}\mathcal{G}(\z)^j\Big).
\end{equation}
Note that, since there are no constant terms in $\mathcal{G}(\z)$, the lowest order term of \eqref{1/zF_int} is just $2(n-1)(k+1)\big(\frac{\mathscr{F}^{(k)}(0)}{k!}\big)\z^k$.

The other term in the coefficient of $\big((d\sqrt{\z})^2+\z h\big)$ is $(\z\mathscr{F})'$.  From \eqref{G_expansion}, we see that this has the expansion
\begin{align}
\label{zF'_expansion}
(\z \mathscr{F})'=\sum_{j=k}^{\infty}(j+1)\frac{\mathscr{F}^{(j)}(0)}{j!}\z^j
\end{align}
around $\z=0$.

We use the expansions \eqref{1/zF_int} and \eqref{zF'_expansion} together to find the expansion of the coefficient of $\big((d\sqrt{\z})^2+\z h\big)$ around $\z=0$.  It will be important to keep track of the coefficient of the lowest order term, and since the lowest order terms in both \eqref{1/zF_int} and \eqref{zF'_expansion} are constant multiples of $\z^k$, we combine the higher order terms as
\begin{align}
\label{higher_order}
\mathcal{H}(\z)=2(n-1)(k+1)\big(\frac{\mathscr{F}^{(k)}(0)}{k!}\big)\z^k\sum_{j=1}^{\infty}(-1)^j\mathcal{G}(\z)^j+\sum_{j=k+1}^{\infty}(j+1)\frac{\mathscr{F}^{(j)}(0)}{j!}\z^j.
\end{align}
Note that the lowest possible order term of $\mathcal{H}(\z)$ is a constant multiple of $\z^{k+1}$.
Finally, around the origin, we find that the coefficient of $\big((d\sqrt{\z})^2+\z h\big)$
\begin{align}
\label{coefficient_1_expansion}
(\z \mathscr{F}(\z))'-\frac{2}{\z^{n-1}\mathscr{F}^{n-2}}\Big(\int\frac{1}{\z^n\mathscr{F}^{n-1}}d\z+\mathcal{C}\Big)^{-1}=&(2n-1)(k+1)\Big(\frac{\mathscr{F}^{(k)}(0)}{k!}\Big)\z^k+\mathcal{H}(\z).
\end{align} 

Now, we examine the expansion of $\z\mathscr{F}$, the coefficient of $g_{\CP^{n-1}}$.  From \eqref{G_expansion}, we see that
\begin{align}
\label{coefficient_2_expansion}
\z\mathscr{F}=\sum_{j=k}^{\infty}\frac{\mathscr{F}^{(j)}(0)}{j!}\z^{j+1},
\end{align}
around $\z=0$, and notice that the lowest order term here is just $\big(\frac{\mathscr{F}^{(k)}(0)}{k!}\big)\z^{k+1}$.

Using \eqref{coefficient_1_expansion} and \eqref{coefficient_2_expansion}, we find that, around $\z=0$, the metric $g$ has the following expansion.
\begin{align}
\begin{split}
\label{metric_expansion_1}
g_{(\mathscr{F},\mathcal{C})}=(2n-1)(k+1)\Big(&\frac{\mathscr{F}^{(k)}(0)}{k!}\Big)\z^k\Big((d\sqrt{\z})^2+\z h\Big)+\Big(\frac{\mathscr{F}^{(k)}(0)}{k!}\Big)\z^{k+1}g_{\CP^{n-1}}\\
&+\mathcal{H}(\z)\Big((d\sqrt{\z})^2+\z h\Big)+\Big(\sum_{j=k+1}^{\infty}\frac{\mathscr{F}^{(j)}(0)}{j!}\z^{j+1}\Big)g_{\CP^{n-1}}.
\end{split}
\end{align}
Denote the terms $\mathcal{H}(\z)\big((d\sqrt{\z})^2+\z h\big)+\big(\sum_{j=k+1}^{\infty}\frac{\mathscr{F}^{(j)}(0)}{j!}\z^{j+1}\big)g_{\CP^{n-1}}$ by $h.o.t.$, for higher order terms, since they are faster vanishing at the original than the first terms in the expansion.  Now, make a change in the radial variable by setting
\begin{align}
\label{r_change}
r^2=\Big(\frac{2n-1}{k+1}\Big)\Big(\frac{\mathscr{F}^{(k)}(0)}{k!}\Big)\z^{k+1}.
\end{align}
In these coordinates, the expansion \eqref{metric_expansion_1} of $g$ becomes
\begin{align}
\label{metric_expansion_2}
g_{(\mathscr{F},\mathcal{C})}=dr^2+\Big(\frac{k+1}{2n-1}\Big)r^2\Big(g_{\CP^{n-1}}+(2n-1)(k+1)h\Big)+h.o.t..
\end{align}
From \eqref{higher_order} and \eqref{coefficient_2_expansion}, we see that the higher order terms in these coordinates satisfy the decay conditions
\begin{align}
\label{hot_decay}
h.o.t.=\mathcal{O}\Big(r^{\frac{2}{k+1}}\Big)dr^2+\mathcal{O}\Big(r^{\frac{2(k+2)}{k+1}}\Big)h+\mathcal{O}\Big(r^{\frac{2(k+2)}{k+1}}\Big)g_{\CP^{n-1}}.
\end{align}
Clearly, this metric extends to all of $\CC^n$ and has the stated canonical asymptotically cone-like singularity at the origin.

\subsection{Regularity at the origin}
\label{regularity_proof}
Here, we examine the regularity, over the origin, of the coefficients of the higher order terms of the metric $g_{(\mathscr{F},\mathcal{C})}$ in the variable $r$.  There are two issues of which to be wary.  Firstly, there may be fractional powers of $r$ in the expansion of these coefficients when we change variables by \eqref{r_change} to obtain \eqref{metric_expansion_2} from \eqref{metric_expansion_1}.  Secondly, if the $\z^{(n-1)(k+1)}\log(\z)\sum_{j=0}^{\infty}H_j\z^j$ term in $\mathcal{G}(\z)$ is nonzero, it will cause derivatives of the coefficient of the $\big((d\sqrt{\z})^2+\z h\big)$ component of the metric to blow up in both $r$ and $\z$ variables, recall \eqref{G(z)}.

We first address the issue of fractional powers of $r$ in the expansion of these coefficients.  Ignoring the higher order terms that have coefficients involving $\log(\z)$, which will be dealt with below, up to constants, the other components of the higher order terms of $g_{(\mathscr{F},\mathcal{C})}$ are of the form $\z^a(d\sqrt{\z})^2$, $\z^bh$ and $\z^cg_{\CP^{n-1}}$ for some integers $a\geq k+1$ and $b,c\geq k+2$.
From the change of variables \eqref{r_change}, observe that for an arbitrary integer $d$
\begin{align}
\begin{split}
\label{z_l_hot}
\z^{d}=&\Big(\frac{2n-1}{k+1}\Big)^{-\frac{d}{k+1}}\Big(\frac{\mathscr{F}^{(k)}(0)}{k!}\Big)^{-\frac{d}{k+1}}r^{\frac{2d}{k+1}},\phantom{=}\text{and}\\
\z^{d}(d\sqrt{\z})^2=&(k+1)^{\frac{d-k}{k+1}}(2n-1)^{-\frac{k+d+2}{k+1}}\Big(\frac{\mathscr{F}^{(k)}(0)}{k!}\Big)^{-\frac{k+d+2}{k+1}}r^{\frac{2(d-k)}{k+1}}dr^2.
\end{split}
\end{align}
When $k=0$ or $k=1$, notice that there are no fractional powers of $r$ in the coefficients here, and the only regularity issue to consider is the higher order $\log(\z)$ term.  However, for $k\geq 2$ there may be fractional powers.  Observe, from \eqref{higher_order}, \eqref{coefficient_1_expansion} and \eqref{coefficient_2_expansion},
that the lowest possible such power necessarily comes from a $\z^{a}(d\sqrt{\z})^2$ term.   Since it is possible for $a=k+1$, from \eqref{z_l_hot}, we see that in the variable $r$ there may be an $r^{\frac{2}{k+1}}dr^2$ term which is only of class $C^0$ when $k\geq 2$.  Therefore, when $k\geq 2$, we can only guarantee, a priori, that $g_{(\mathscr{F},\mathcal{C})}$ is of class $C^0$ over the origin.  

However, when $k\geq 2$, there still may be higher regularity in a particular case, and if further information is known about the expansion of $\mathscr{F}$ it may be possible to improve this estimate.  For instance, if the order of the second nonzero term in the Taylor expansion of $\mathscr{F}(\z)$ about the origin is known, denote this by $\ell$, then the lowest possible order term with fractional powers of $r$ will be a constant multiple of $r^{\frac{2(\ell-k)}{k+1}}dr^2$.  If $\ell$ is such that $\frac{2(\ell-k)}{k+1}$ is an integer, then one would look a step further into the expansion, and so on and so forth.

Now, we address the issue of of the $\log(\z)$ term.  Observe, from \eqref{G(z)} and \eqref{1/zF_int}, that 
the lowest order term in the expansion of the coefficient of $\big((d\sqrt{\z})^2+\z h\big)$ that contains a $\log(\z)$ is a constant multiple of $\z^{(n-1)(k+1)+k}\log(\z)\sum_{j=0}^{\infty}H_j\z^j$.
Changing variables as in \eqref{r_change}, we see that
\begin{align}
\label{zlog_expansion}
\z^{(n-1)(k+1)+k}\log(\z)(d\sqrt{\z})^2=\frac{2}{C^n(k+1)^3}r^{2(n-1)}\log\Big(\frac{r}{\sqrt{C}}\Big)dr^2,
\end{align}
where the constant $C=\big(\frac{2n-1}{k+1}\big)\big(\frac{\mathscr{F}^{(k)}(0)}{k!}\big)$.  Recall, from \eqref{coefficient_1_expansion_0}, that if any of the $H_j$ are nonzero, then necessarily $H_0$ is nonzero.  Therefore, if $\sum_{j=0}^{\infty}H_j\z^j$ is nonzero, from \eqref{zlog_expansion}, we see that there are precisely $2n-3$
derivatives in the $r$ variable before the term containing $\log(r)$ blows up at the origin.  

The obstruction to regularity of the terms containing $\log(r)$ is removed if and only if $H_j=0$ for all $j$.  Observe, from \eqref{int_1/zG}, \eqref{int_1/zG^-1}, \eqref{D,E_coeffiecients} and \eqref{coefficient_1_expansion_0}, that this occurs if and only if 
\begin{align}
\frac{\partial^{(n-1)(k+1)}}{\partial\z^{(n-1)(k+1)}}\Big(\frac{\z^k}{\mathscr{F}}\Big)^{n-1}\Big|_{\z=0}=0.
\end{align}
This is equivalent to the $(n-1)(k+1)$-degree term of the Taylor expansion of $\big(\frac{\z^k}{\mathscr{F}}\big)^{n-1}$ around $\z=0$ vanishing.

In the cases that $k=0$ or $k=1$, since there are no fractional powers of $r$ in the expansion of the coefficients of $g_{(\mathscr{F},\mathcal{C})}$ as we saw above, this is the only obstruction to the coefficients of the metric extending smoothly over the origin, and the regularity results of Theorem~\ref{thm_3} follow.  When $k\geq2$, recall that, a priori, the metric is only guaranteed to be of class $C^0$ over the origin since there may be an $r^{\gamma}$ term, where $0<\gamma<1$, in a coefficient of the expansion of the higher order terms of the metric.  However, given more information about $\mathscr{F}(\z)$ and its expansion, the following precise statement can be made.  Let $\eta$ denote the lowest non-integral power of $r$ in the coefficients of the expansion of $g_{(\mathscr{F},\mathcal{C})}$.  Then, the metric is of class
\begin{align}
\begin{cases}
C^{\lfloor \eta\rfloor}\phantom{=}&\text{if }\frac{\partial^{(n-1)(k+1)}}{\partial\z^{(n-1)(k+1)}}\big(\frac{\z^k}{\mathscr{F}}\big)^{n-1}\big|_{\z=0}=0\\
C^{\min\{\lfloor \eta\rfloor, 2n-3\}}\phantom{=}&\text{otherwise}
\end{cases},
\end{align}
where $\lfloor\cdot\rfloor$ is the greatest integer function.
\begin{remark}
\label{k>2Cinfty}
{\em Note that, even when $k\geq 2$, it is possible that the coefficients of $g_{(\mathscr{F},\mathcal{C})}$ only contain integral powers of $r$.  In this case we say that $\eta=\infty$.  Then, the metric is of class $C^{\infty}$ if and only if $\frac{\partial^{(n-1)(k+1)}}{\partial\z^{(n-1)(k+1)}}\big(\frac{\z^k}{\mathscr{F}}\big)^{n-1}\big|_{\z=0}=0$.  For example, consider the admissible pair $(\mathscr{F},\mathcal{C})=(\z^2+\z^8,0)$ in complex dimension $n=2$.  The associated metric $g_{(\mathscr{F},\mathcal{C})}$ will be of class $C^{\infty}$ over the origin.  See Section \ref{fix_2}.}
\end{remark}

\subsection{Proof of Corollary \ref{cor_thm_3}}
\label{pf_cor_thm_3}
Consider the uniquely associated $\U(n)$-invariant non-K\"ahler Hermitian Klsc metric, $g_{(\mathscr{F},\mathcal{C})}$, to an admissible pair $(\mathscr{F},\mathcal{C})$ on the annulus $\mathcal{A}_{0,\infty}\subset(\CC^n,J_0)$, where $\mathscr{F}(\z)\in C^{\infty}(\CC^n)$ has first nonzero term of its Taylor expansion about the origin of order $k$.  From Theorem \ref{thm_3}, we see that this extends to a metric on $\CC^n$ with an asymptotically cone-like singularity at the origin.  
If $(2n-1)(k+1)=\ell^2$, for some integer $\ell$, then observe, from Theorem \ref{thm_3}, that this metric is given by
\begin{align}
g_{(\mathscr{F},\mathcal{C})}=dr^2+\Big(\frac{k+1}{2n-1}\Big)r^2\Big(g_{\CP^{n-1}}+\ell^2h\Big)+h.o.t..
\end{align}
Then, take the $\mathbb{Z}/\ell\mathbb{Z}$ quotient along the Hopf fibers generated by
\begin{align}
(z_1,\cdots,z_n)\mapsto (e^{2\pi i/\ell}z_1,\cdots,e^{2\pi i/\ell}z_n),
\end{align}
and note that this can be identified with $\CC^n$ itself.  Since the complex structure is compatible with this process, it descends to this quotient.  Thus, we see that the metric $g_{(\mathscr{F},\mathcal{C})}$ descends to 
\begin{align}
\overline{g}_{(\mathscr{F},\mathcal{C})}=dr^2+\Big(\frac{k+1}{2n-1}\Big)r^2g_{S^{2n-1}}+h.o.t.,
\end{align}
where $g_{S^{2n-1}}$ is the standard metric on $S^{2n-1}$.  Finally, notice that if $k+1=2n-1$, in which case $(2n-1)(k+1)$ is always a square, then
\begin{align}
\overline{g}_{(\mathscr{F},\mathcal{C})}=dr^2+r^2g_{S^{2n-1}}+h.o.t.
\end{align}
is a nonsingular metric.  These metrics are of the same regularity class over the origin as the initial metric $g_{(\mathscr{F},\mathcal{C})}$, recall Section \ref{regularity_proof}.
\begin{remark}
{\em An example of a nonsingular Klsc metric of class $C^{\infty}$ is obtained from the admissible pair $(\mathscr{F},\mathcal{C})=(\z^2+\z^8,0)$ in complex dimension $n=2$, since $3=2n-1=k+1$, and from Remark \ref{k>2Cinfty} we know that $\overline{g}_{(\mathscr{F},\mathcal{C})}$ is of class $C^{\infty}$ over the origin.  See Section \ref{fix_2}.
}
\end{remark}

\section{Examples}
In this section we give several examples of non-K\"ahler Hermitian Klsc metrics arising via Theorem~\ref{thm_1}, Theorem \ref{thm_2} and Theorem \ref{thm_3}.  In Section \ref{flat}, Section~\ref{burns} and Section \ref{Fubini-Study}, we begin with a K\"ahler metric and obtain the corresponding non-K\"ahler Hermitian Klsc metric via Theorem \ref{thm_1}.  We also find the scalar curvature of the resulting Klsc metric in each case and examine its behavior.
Then, in Section~\ref{fix_1} and Section \ref{fix_2}, we fix an admissible pair $(\mathscr{F},\mathcal{C})$ on the annulus $\mathcal{A}_{0,\infty}$, and find the unique associated non-K\"ahler Hermitian Klsc metric.  We also examine the behavior and regularity of the respective Klsc metrics at the origin as well as find the K\"ahler metrics to which they are conformal.  
To simplify the computations, we will work in in complex dimension $n=2$, except in Section \ref{flat}, as well as take the constant of the admissible pair to be $\mathcal{C}=0$ in Section \ref{fix_1} and Section \ref{fix_2}.

\subsection{The non-K\"ahler Klsc metric conformal to the Euclidean metric on $\CC^n$}
\label{flat}
Consider the flat metric, $g_{Euc}=(d\sqrt{\z})^2+\z g_{S^{2n-1}}$, on $\CC^n$ which arises from the K\"ahler potential $\phi=\z$.  From Theorem \ref{thm_1}, the conformal metric $g_{Klsc}=v^2g_{Euc}$, where
\begin{align}
\label{flat_u}
v^2=(n-1)^{-\frac{2}{2n-1}}\z^{-\frac{2(n-1)}{2n-1}},
\end{align}
is a non-K\"ahler Hermitian Klsc metric.
Changing variables by setting 
\begin{align}
r^2=(n-1)^{-\frac{2}{2n-1}}(2n-1)^2\z^{\frac{1}{2n-1}},
\end{align}
we see that the Klsc metric, given by
\begin{align}
g_{Klsc}=dr^2+\frac{r^2}{(2n-1)^2}g_{S^{2n-1}},
\end{align}
is a conical metric. 

Lastly, we find that the scalar curvature of this Klsc metric $g_{Klsc}$ is given by
\begin{align}
S_{g_{Klsc}}=2S_{C_{g_{Klsc}}}=8n(n-1)^{\frac{4n}{2n-1}}\z^{-\frac{1}{2n-1}}=8n(2n-1)(n-1)^2r^{-2}.
\end{align}

\subsection{The non-K\"ahler Klsc metric conformal to the Burns metric on $\mathcal{O}_{\PP^1}(-1)$}
\label{burns}
The Burns metric is a scalar-flat K\"ahler ALE metric on the blow-up of $\CC^2$ at the origin \cite{Burns}.  It arises from the K\"ahler potential $\phi=\z+\log(\z)$, and is given by
\begin{align}
g_B=(d\sqrt{\z})^2+(\z+1)g_{\CP^1}+\z h.
\end{align}
Note the $\CP^1$ at the origin.  From Theorem \ref{thm_1}, we see that the conformal metric $g_{Klsc}=v^2g_{B}$, where
\begin{align}
\label{flat_burns}
v^2=\log^{\frac{2}{3}}\Big(1+\frac{1}{\z}\Big),
\end{align}
is a non-K\"ahler Hermitian Klsc metric.  Explicitly, we have that 
\begin{align}
g_{Klsc}=\log^{\frac{2}{3}}\Big(1+\frac{1}{\z}\Big)\Big((d\sqrt{\z})^2+(\z+1)g_{\CP^1}+\z h\Big).
\end{align}
Notice that here, the Klsc metric $g_{Klsc}$ is defined on $\CC^2\setminus\{0\}$.  In a sense, one can view the conformal factor as causing the metric to blow up along the $\CP^1$ at the origin while the Hopf fibers still shrink.

Lastly, we find that the scalar curvature of this Klsc metric $g_{Klsc}$ is given by
\begin{align}
S_{g_{Klsc}}=2S_{C_{g_{Klsc}}}=\frac{8}{3}\Bigg(\z(\z+1)^2\log^{\frac{8}{3}}\Big(1+\frac{1}{\z}\Big)\Bigg)^{-1}.
\end{align}
Notice that the scalar curvature blows as the origin is approached.

\subsection{The non-K\"ahler Klsc metric conformal to the Fubini-Study metric}
\label{Fubini-Study}
Here we will see Theorem \ref{thm_1} applied to a compact manifold and obtain non-K\"ahler Klsc metrics on an annular decomposition of the manifold minus closed sets of positive codimension.  Consider the Fubini-Study metric, which is K\"ahler Einstein, on $\CP^2$.  It arises from the K\"ahler potential $\phi=\log(\z+1)$, and is given by
\begin{align}
g_{FS}=\frac{1}{(\z+1)^2}\Big((d\sqrt{\z})^2+\z h\Big)+\frac{\z}{(\z+1)}g_{\CP^1}.
\end{align}
Clearly, this extends over the point at $\z=0$ and the $2$-sphere at infinity smoothly.  From Theorem \ref{thm_1}, we see that the conformal metric $g_{Klsc}=v^2g_{FS}$, where
\begin{align}
v^2=\Big|\log(\z)-\frac{1}{\z}\Big|^{\frac{2}{3}},
\end{align}
is a non-K\"ahler Klsc metric wherever it is defined.  To see where this is, first notice that $\log(\z)-\frac{1}{\z}=0$ at $\z=\gamma\approx 1.763\dots$.  Next, observe that the conformal factor causes the metric to blow up along to $2$-sphere at infinity, while the Hopf fibers still shrink there.  Therefore, removing the $2$-sphere at infinity, the point at the origin and the $S^3$ given by $\z=\gamma$, we find that the conformal metric
\begin{align}
g_{Klsc}=\Big|\log(\z)-\frac{1}{\z}\Big|^{\frac{2}{3}}\Bigg(\frac{1}{(\z+1)^2}\Big((d\sqrt{\z})^2+\z h\Big)+\frac{\z}{(\z+1)}g_{\CP^1}\Bigg)
\end{align}
is a non-K\"ahler Klsc metric on the disjoint union of annuli $\mathcal{A}_{0,\gamma}\amalg\mathcal{A}_{\gamma,\infty}$.

Lastly, we find that the scalar curvature of this Klsc metric $g_{Klsc}$.  Although it is necessary to separate the computation of the transformation of scalar curvature on each of the respective annuli since $\log(\z)-\frac{1}{\z}$ is negative for $\z<\gamma$ and positive for $\z>\gamma$, we find that the same formula gives the scalar curvature of the non-K\"ahler Klsc metric on both $\mathcal{A}_{0,\gamma}$ and $\mathcal{A}_{\gamma,\infty}$, and this is
\begin{align}
S_{g_{Klsc}}=2S_{C_{g_{Klsc}}}=\Big(\log(\z)+\frac{1}{\z}\Big)^{-\frac{2}{3}}\Bigg(\frac{8}{3}(\z+1)\Big(1+\frac{1}{\z}\Big)^3\Big(\log(\z)+\frac{1}{\z}\Big)^{-1}+12\Bigg).
\end{align}

\subsection{Obtaining the non-K\:ahler Klsc metric by fixing $\mathscr{F}(\z)=\z^2+\z^8$ on $\CC^2$}
\label{fix_2}
Since $(\z^2+\z^8)'>0$ for all $\z>0$, the pair 
\begin{align}
(\mathscr{F},\mathcal{C})=(\z^2+\z^8,0)
\end{align}
is clearly admissible on the annulus $\mathcal{A}_{0,\infty}\subset\CC^2$.
Note that the first nonzero term of the Taylor expansion of $\mathscr{F}$ about the origin is of order $2$.  To obtain the corresponding Klsc metric, from Theorem \ref{thm_2} and Theorem \ref{thm_3}, in a small neighborhood of the origin we compute that 
\begin{align}
\begin{split}
(\z\mathscr{F})'-\frac{2}{\z}\Big(\int\frac{1}{\z^2\mathscr{F}}d\z\Big)^{-1}= & 9\z^2+9\z^8+6\z^2\sum_{n=1}^{\infty}\Big(\sum_{j=1}^{\infty}\frac{(-1)^j}{2j-1}\z^{6j}\Big)^n.
\end{split}
\end{align}
Thus, in a small neighborhood of the origin,the Klsc metric is given by
\begin{align}
\begin{split}
g_{(\mathscr{F},0)}=\Bigg(9\z^2+9\z^8+6\z^2\sum_{n=1}^{\infty}\Big(\sum_{j=1}^{\infty}\frac{(-1)^j}{2j-1}\z^{6j}\Big)^n\Bigg)\Big((d\sqrt{\z})^2+\z h\Big)+(\z^3+\z^9)g_{\CP^1}.
\end{split}
\end{align}
Then, by making the change of variables $r^2=\z^3$, we see that
\begin{align}
g_{(\mathscr{F},0)}=dr^2+r^2(g_{\CP^1}+9h)+\Bigg(r^4+\frac{6}{9}\sum_{n=1}^{\infty}\Big(\sum_{j=1}^{\infty}\frac{(-1)^j}{2j-1}r^{4j}\Big)^n\Bigg)(dr^2+9r^2h)+r^6g_{\CP^1}.
\end{align}
Notice that there are no fractional powers of $r$ in the coefficients of the metric.  Therefore, recalling 
Remark \ref{k>2Cinfty}, even though $k=2$, this metric is of class $C^{\infty}$ over the origin since 
\begin{align}
\frac{\partial}{\partial\z}\Big(\frac{\z^2}{\mathscr{F}}\Big)\Big|_{\z=0}=0.
\end{align}

Now, as in Corollary \ref{cor_thm_3}, the metric descends via the $\mathbb{Z}/3\mathbb{Z}$ quotient along the Hopf fibers generated by
\begin{align}
(z_1,z_2)\mapsto(e^{\frac{2\pi i}{3}}z_1,e^{\frac{2\pi i}{3}}z_2)
\end{align}
to the nonsingular Klsc metric given by
\begin{align}
\begin{split}
\overline{g}_{(\mathscr{F},0)}=dr^2+r^2(g_{\CP^1}+h)+\Bigg(r^4+\frac{6}{9}\sum_{n=1}^{\infty}\Big(\sum_{j=1}^{\infty}\frac{(-1)^j}{2j-1}r^{4j}\Big)^n\Bigg)(dr^2+r^2h)+r^6g_{\CP^1}.
\end{split}
\end{align}
Furthermore, note that this is in fact a smooth nonsingular metric.

Lastly, we are able to recover the K\"ahler metric to which the Klsc metric $g_{(\mathscr{F},0)}$ is conformal.  From Proposition \ref{admissible_proposition}, we find that the derivative of the K\"ahler potential corresponding to this Klsc metric is
\begin{align}
\phi'=(\z^2+\z^8)\Big(\frac{1}{3}\int\frac{1}{\z^2(\z^2+\z^8)}d\z\Big)^{-2}=\frac{81(\z^2+\z^8)}{\big(\tan^{-1}(\frac{1}{\z^3})-\frac{1}{\z^3}\big)^2}.
\end{align}
Then, using this to compute $(\z\phi')'$, we can recover the K\"ahler form restricted to the $z_1$-axis and, in turn, find that the K\"ahler metric is given by
\begin{equation}
g=\frac{243\big((3\z^9+\z^3)\tan^{-1}\big(\frac{1}{\z^3}\big)-3(\z^6+1)\big)}{\z\big(\tan^{-1}(\frac{1}{\z^3})-\frac{1}{\z^3}\big)^3}\Big((d\sqrt{\z})^2+\z h\Big)+\frac{81(\z^3+\z^9)}{\big(\tan^{-1}(\frac{1}{\z^3})-\frac{1}{\z^3}\big)^2} \big(g_{\CP^1}\big).
\end{equation}

\subsection{Obtaining the non-K\:ahler Klsc metric by fixing $\mathscr{F}(\z)=\z+\z^2$ on $\CC^2$}
\label{fix_1}
While this choice of specified function may look similar to that of Section \ref{fix_2}, it will illustrate a different type of possible behavior in that it will only be of class $C^1$ over the origin as opposed to class $C^{\infty}$.  Furthermore, it will not descend to a nonsingular metric.

Since $(\z+\z^2)'>0$ for all $\z>0$, the pair 
\begin{align}
(\mathscr{F},\mathcal{C})=(\z+\z^2,0)
\end{align}
is clearly admissible on the annulus $\mathcal{A}_{0,\infty}\subset\CC^2$.
Note that the first nonzero term of the Taylor expansion of $\mathscr{F}$ about the origin is of order $1$.  To obtain the corresponding Klsc metric, from Theorem \ref{thm_2} and Theorem \ref{thm_3}, we compute that 
\begin{align}
\begin{split}
(\z\mathscr{F})'-\frac{2}{\z}\Big(\int\frac{1}{\z^2\mathscr{F}}d\z\Big)^{-1}= 2\z+3\z^2+\frac{4\z}{1-2\z+2\z^2\log(1+\frac{1}{\z})}.
\end{split}
\end{align}
Thus, the Klsc metric is given by
\begin{align}
\begin{split}
g_{(\mathscr{F},0)}=\Bigg(2\z+3\z^2+\frac{4\z}{1-2\z+2\z^2\log(1+\frac{1}{\z})}\Bigg)\Big((d\sqrt{\z})^2+\z h\Big)+(\z^2+\z^3)g_{\CP^1}.
\end{split}
\end{align}
Then, by making the change of variables $r^2=\frac{3}{2}\z^2$ and taking the appropriate expansions in a small neighborhood of the origin, we see that
\begin{align}
\begin{split}
g_{(\mathscr{F},0)}=&dr^2+\frac{2}{3}r^2(g_{\CP^1}+6h)+\Big(\frac{r}{\sqrt{6}}+\frac{2}{3}\sum_{n=1}^{\infty}H(r)^n\Big)(dr^2+4r^2h)+\Big(\frac{2}{3}\Big)^{\frac{3}{2}}r^3g_{\CP^1},
\end{split}
\end{align}
where 
\begin{align}
H(r)=\big(\sqrt{8/3}\big)r-\frac{4}{3}r^2\log\Big(1+\big(\sqrt{3/2}\big)r^{-1}\Big).
\end{align}
Notice that while there are no fractional powers of $r$ in the coefficients of the metric there are, however, nonvanishing $\log(\cdot)$ terms.  Without actually having found this expansion explicitly, we could have seen that this occurs from Theorem \ref{thm_3} since
\begin{align}
\frac{\partial^2}{\partial\z^2}\Big(\frac{\z}{\mathscr{F}}\Big)\Big|_{\z=0}=2\neq0.
\end{align}
In turn, this metric is only of class $C^1$ over the origin.

Lastly, we are able to recover the K\"ahler metric to which this Klsc metric $g_{(\mathscr{F},0)}$ is conformal.  From Proposition \ref{admissible_proposition}, we find that the derivative of the K\"ahler potential corresponding to this Klsc metric is
\begin{align}
\phi'=(\z^2+\z^2)\Big(\frac{1}{3}\int\frac{1}{\z^2(\z^2+\z^2)}d\z\Big)^{-2}=\frac{36(\z^3+\z^4)}{\big(1-2\z+2\z^2\log(1+\frac{1}{\z})\big)^2}.
\end{align}
Then, using this to compute $(\z\phi')'$, we can recover the K\"ahler form restricted to the $z_1$-axis and, in turn, find that the K\"ahler metric is given by
\begin{align}
\begin{split}
g=\frac{36\z^5\big(6-\z-6\z^2+2(3\z+2)\z^2\log(1+\frac{1}{\z})\big)}{\big(1-2\z+2\z^2\log(1+\frac{1}{\z})\big)^3}&\Big((d\sqrt{\z})^2+\z h\Big)\\
+\frac{36(\z^4+\z^5)}{\big(1-2\z+2\z^2\log(1+\frac{1}{\z})\big)^2} &\big(g_{\CP^1}\big).
\end{split}
\end{align}

\bibliography{hermitian_scalar_references}

\end{document}